\newcommand{\Int}{\mbox{\rm Int}}
\newtheorem{theorem}{Theorem}[section]
\newtheorem{lemma}[theorem]{Lemma}
\newtheorem{claim}[theorem]{Claim}
\newtheorem{proposition}[theorem]{Proposition}
\newtheorem{remark}[theorem]{Remark}
\theoremstyle{definition}
\newtheorem{definition}{Definition}[section]
\theoremstyle{remark}
\def\square{\hfill${\vcenter{\vbox{\hrule height.4pt \hbox{\vrule
width.4pt height7pt \kern7pt \vrule width.4pt} \hrule height.4pt}}}$}
\newcommand{\PI}{\partial_{\infty}}
\newcommand{\Si}{\partial_\infty \mathbb{H}^3}
\newcommand{\ov}{\overline}
\newcommand{\cF}{\mathcal{F}}
\newcommand{\BZ}{\mathbb{Z}}
\newcommand{\ve}{\varepsilon}
\newcommand{\T}{\mathcal{T}}
\renewcommand{\l}{\lambda}
\newcommand{\cL}{\mathcal{L}}
\newcommand{\wh}{\widehat}
\newcommand{\wt}{\widetilde}
\newcommand{\C}{\mathcal{C}}
\newcommand{\cC}{\mathcal{C}}
\newcommand{\Z}{\mathcal{Z}}
\newcommand{\ZZ}{\mathbb{Z}}
\newcommand{\N}{\mathbb{N}}
\newcommand{\R}{\mathbb{R}}
\newcommand{\rth}{\R^3}
\newcommand{\HH}{\mathbb{H}}
\newcommand{\ben}{\begin{enumerate}}
\newcommand{\bit}{\begin{itemize}}
\newcommand{\een}{\end{enumerate}}
\newcommand{\eit}{\end{itemize}}
\newcommand{\G}{\Gamma}
\newcommand{\ed}{\end{document}}
\begin{document}
\title{Non-properly Embedded $H$-Planes in $\mathbb{H}^3$}
\author{Baris Coskunuzer} \thanks{The first author is partially supported by TUBITAK 2219 Grant, and Fulbright Grant.}
\author{William H. Meeks III} \thanks{The second author was supported in part by NSF Grant DMS -
   1309236. Any opinions, findings, and conclusions or recommendations
   expressed in this publication are those of the authors and do not
   necessarily reflect the views of the NSF}
\author{Giuseppe Tinaglia} \thanks{The third author was partially supported by EPSRC grant no. EP/I01294X/1}
\address{Department of Mathematics \\ MIT \\ Cambridge, MA 02139}
\address{Department of Mathematics \\ Koc University \\ Sariyer, Istanbul 34450 Turkey}
\email{bcoskunuzer@ku.edu.tr}
\address{Department of Mathematics \\ University of Massachusetts \\ Amherst, MA 01002}
\email{profmeeks@gmail.com}
\address{Department of Mathematics \\ King's College London, London}
\email{giuseppe.tinaglia@kcl.ac.uk}

\begin{abstract}
For any $H \in [0,1)$, we construct  complete, non-proper, stable,
simply-connected surfaces with constant mean curvature $H$ embedded in hyperbolic three-space.
\end{abstract}

\keywords{Minimal surface, constant mean curvature, nonproperly embedded, Calabi-Yau Conjecture.}

\maketitle

\section{Introduction}

In
their ground breaking work
~\cite{cm35} Colding and
Minicozzi proved that  complete minimal surfaces  embedded
in $ \rth$ with finite topology are proper.
 Based on the techniques in \cite{cm35},  Meeks
and Rosenberg~\cite{mr13} then proved that
complete minimal surfaces embedded in $\mathbb{R}^3$ are proper, if they
have positive injectivity radius; since complete, immersed finite topology minimal
surfaces in $\rth$ have positive injectivity radius,
their result generalized Colding and Minicozzi's work.

Recently  Meeks and Tinaglia~\cite{mt7} proved that  complete
 constant mean curvature surfaces  embedded in
$\rth$  are proper if they
have finite topology or have positive injectivity radius.
In fact their results in $\rth$ should generalize
to show that a complete embedded
surface $\Sigma$ of constant mean curvature $H\in [1,\infty)$
in a complete hyperbolic
three-manifold is proper if $\Sigma$ has finite topology
or it is connected and has positive injectivity
radius; this is work in progress in~\cite{mt11}.

In contrast to the above  results, in this paper we prove the
following existence theorem for non-proper, complete
simply-connected surfaces embedded in $\mathbb{H}^3$
with constant mean curvature $H\in [0,1)$.
See the Appendix where a description
of  the spherical catenoids appearing in the next theorem is  given.

\begin{theorem}\label{main} For any $H\in [0,1)$ there
exists a complete simply-connected surface $\Sigma_H$ embedded
in $\mathbb{H}^3$ with constant mean curvature $H$
satisfying the following properties:
\begin{enumerate}
\item The closure of $\Sigma_H$ is a lamination with three
leaves, $\Sigma_H$, $C_1$ and $C_2$,
where $C_1$ and  $C_2$
are stable spherical catenoids of constant mean curvature $H$ in
$\mathbb{H}^3$ with the same axis $L$ of revolution.
In particular, $\Sigma_{H}$ is not properly embedded in $\HH^3$.
\item The asymptotic boundary of $\Sigma_H$ is a pair of
embedded curves  in $\partial_{\infty} \HH^3$ which spiral
into the union  of the round circles which are the
asymptotic boundaries of $C_1$ and $C_2$.
\item Let $K_L$ denote the  Killing field generated by
rotations around $L$. Every
integral curve of $K_L$ that lies in the region between
$C_1$ and $C_2$ intersects $\Sigma_H$
transversely in a single point.  In particular,
the closed region between
$C_1$ and $C_2$ is foliated by  surfaces   of constant mean curvature $H$,
where the leaves are
$C_1$ and $C_2$ and  the rotated images $\Sigma_H ({\theta})$ of
$\Sigma$ around $L$ by angle $\theta\in [0,2\pi)$.
\end{enumerate}
\end{theorem}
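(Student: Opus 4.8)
The plan is to produce $\Sigma_H$ as a \emph{Killing graph} over a planar domain, obtained as a limit of solutions of Jenkins--Serrin-type boundary value problems in which the two catenoids play the role of barriers; granting the graph, properties (1)--(3) and the stability asserted in the abstract are then essentially formal.

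Fix a geodesic $L\subset\mathbb{H}^3$, let $K_L$ be the Killing field of rotations about $L$, and write $\mathbb{H}^3$ in Fermi coordinates $(\rho,\theta,z)$ adapted to $L$, so that $\rho$ is distance to $L$, $K_L=\partial_\theta$, the metric is the rotationally symmetric warped product $d\rho^2+\cosh^2\!\rho\,dz^2+\sinh^2\!\rho\,d\theta^2$, and $P:=\{\theta\equiv 0\}$ is the totally geodesic plane through $L$. A rotationally invariant surface is exactly a Killing cylinder $\gamma\times\mathbb{R}_\theta$ over a curve $\gamma\subset P$; in particular each spherical catenoid $C_i$ of mean curvature $H$ about $L$ is the Killing cylinder over its profile curve $\gamma_i=C_i\cap P$, and $\gamma_i$ stays at distance at least its neck radius from $L$. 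Using the description of the catenoids recalled in the Appendix, I would first select two \emph{disjoint, stable} catenoids $C_1,C_2$ of mean curvature $H$ with the common axis $L$, chosen so that the region $\Omega\subset P$ between $\gamma_1$ and $\gamma_2$ is a topological strip disjoint from $L$, bounded by $\gamma_1$, $\gamma_2$ and two ideal arcs in $\partial_\infty\mathbb{H}^3$. I then look for $\Sigma_H$ in the form $\theta=u$ over $\Int\Omega$ --- the $K_L$-orbit of the graph of a function $u$ on $\Int\Omega$ --- where $u$ solves the quasilinear elliptic equation saying that the Killing graph has constant mean curvature $H$, with the angular boundary behavior $u\to+\infty$ along $\gamma_1$ and $u\to-\infty$ along $\gamma_2$. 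If such a $u$ exists and is finite and smooth on $\Int\Omega$, the associated surface is automatically embedded (the graph map is injective because distinct $(\rho,z)$ give distinct points), simply connected (it is diffeomorphic to $\Int\Omega$), transverse to $K_L$, and meets each $K_L$-orbit over $\Int\Omega$ in exactly one point, since a $\theta$-graph contains no tangent vector proportional to $\partial_\theta$; moreover its closure in $\mathbb{H}^3$ is $\Sigma_H\cup C_1\cup C_2$, because $u$ blows up in the angular variable precisely along $\gamma_1\cup\gamma_2$ and hence $\Sigma_H$ accumulates on all of $C_1$ and all of $C_2$ but nowhere else.

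The analytic heart, and the main obstacle, is the \textbf{existence and boundary asymptotics of $u$}. I would solve the constant-mean-curvature-$H$ Killing graph equation on an exhaustion $\Omega_n\uparrow\Int\Omega$ by smooth bounded subdomains, imposing $u_n\equiv +n$ on the part of $\partial\Omega_n$ near $\gamma_1$, $u_n\equiv -n$ on the part near $\gamma_2$, and interpolating data on the remaining arcs; the Killing cylinders $C_1,C_2$ and other members of the rotational catenoid family provide the barriers needed to solve each Dirichlet problem and to control the gradients near the ideal arcs, while a normalization of $u_n$ at a fixed interior point together with interior gradient and curvature estimates yields a subsequential limit $u$ on $\Int\Omega$. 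The hard part will then be the Jenkins--Serrin-type argument: showing that this limit is \emph{nonempty}, that $u\to+\infty$ on $\gamma_1$ and $u\to-\infty$ on $\gamma_2$ strongly enough that $\Sigma_H$ genuinely spirals into $C_1$ and $C_2$, that the induced metric on $\Sigma_H$ is complete (a divergent path must either wind infinitely often near some $C_i$, each winding having length at least $2\pi\sinh(\mathrm{dist}(L,\Omega))>0$, or run off to $\partial_\infty\mathbb{H}^3$), and that along the two ideal arcs of $\partial\Omega$ the function $u$ behaves so that $\partial_\infty\Sigma_H$ is a pair of embedded curves spiraling into the round circles comprising $\partial_\infty C_1\cup\partial_\infty C_2$ --- this last point being property (2). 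It is precisely meeting the flux/length admissibility conditions of this Jenkins--Serrin problem that dictates the allowable choices of the pair $C_1,C_2$ in the first step.

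The remaining assertions then follow from the graph structure. For (3): $K_L$ is transverse to $\Sigma_H$ and meets it once on every orbit over $\Int\Omega$, so the rotated images $\Sigma_H(\theta)$, $\theta\in[0,2\pi)$, are pairwise disjoint and, together with $C_1$ and $C_2$, foliate the closed region $W$ between the catenoids by surfaces of constant mean curvature $H$. Stability of $\Sigma_H$: transversality means $\langle K_L,N\rangle$ is a nowhere-vanishing solution of the Jacobi equation on the connected surface $\Sigma_H$, and a constant-mean-curvature surface admitting a sign-definite Jacobi function is stable. Finally (1): $\overline{\Sigma_H}=\Sigma_H\cup C_1\cup C_2$ with the three surfaces pairwise disjoint and $C_1,C_2$ properly embedded, so $\{\Sigma_H,C_1,C_2\}$ is a lamination; $C_1$ and $C_2$ are stable spherical catenoids by the choice made at the outset, and $\Sigma_H$ is manifestly not properly embedded.
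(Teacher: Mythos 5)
Your geometric picture is exactly the paper's: the surface is sought as a graph in the rotational Killing direction over the strip between the two profile curves, obtained as a limit of compact pieces on an exhaustion, with the catenoid family as barriers, and conclusions (1)--(3) then read off from graphicality. But the proposal reduces the whole theorem to an unproven Jenkins--Serrin-type existence statement, and the two places where you yourself flag ``the hard part'' are precisely where the content of the theorem lives. Concretely, there are two gaps. First, \textbf{non-degeneracy of the limit}: with Dirichlet data $\pm n$ and only a normalization of $u_n$ at one interior point, nothing in your argument prevents the limit from developing divergence lines along the profile curves of intermediate catenoids $\C^{\lambda}$, $\lambda\in(\lambda_1,\lambda_2)$ --- i.e.\ the limit object could contain (or reduce to) rotationally invariant leaves, in which case $\Sigma_H$ as described does not exist. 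The paper spends an entire argument (Claim~\ref{complete}, Cases A, B, C) ruling out rotational leaves in the limit lamination, and this requires nontrivial input: the Gomes classification of rotational $H$-surfaces, the stable/unstable dichotomy for the catenoids $\C^{\lambda}$ with $\lambda\gtrless c_H$, the contact-angle-at-infinity argument, and the foliation of $W$ by the family $\{\C^{\lambda}\}_{\lambda\in[\lambda_1,\lambda_2]}$. That foliation in turn forces the constraint $\lambda_1\geq c_H$ (Proposition~\ref{foliation}), which your ``choose two disjoint stable catenoids'' step does not identify; without it the maximum-principle arguments that confine the surfaces to $W$ and rule out interior extrema of $\lambda$ (Lemma~\ref{slab}) are unavailable.

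Second, \textbf{the asymptotic boundary behavior (2) is not a consequence of the PDE}: the spiralling curves $\Gamma=\widehat\beta_+\cup\widehat\beta_-$ in $\partial_\infty\HH^3$ have to be \emph{prescribed}, by choosing the boundary arcs $\beta^n_\pm$ of the compact pieces to converge to specific graphical spirals at infinity and then trapping the limit with the barrier disks $\Delta^1(p),\Delta^2(p)$ (Lemma~\ref{lem:3.1}, Claim~\ref{claimtwo}). Your ``interpolating data on the remaining arcs'' leaves the behavior of $u$ along the two ideal arcs of $\partial\Omega$ completely open, so even granting existence of $u$ you would not obtain that $\partial_\infty\Sigma_H$ is a pair of embedded spirals rather than something wilder. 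Relatedly, your scheme needs an interior gradient or curvature estimate for the CMC Killing-graph equation to extract the limit; this is not off the shelf here, and the paper instead gets curvature estimates for free by producing the compact pieces as minimizers of the functional $I(M)=A(M)+2HV(M)$ (hence stable, hence subject to the estimates of~\cite{rst1}) and proves their graphicality a posteriori by sliding with the isometries $T_\alpha$ and the maximum principle. If you want to salvage the PDE formulation, you would at minimum need to (i) prove a Jenkins--Serrin theorem for rotational Killing graphs of constant mean curvature $H$ over non-compact domains in a totally geodesic $\HH^2$ with two ideal boundary arcs, including the flux conditions at infinity, and (ii) supply the divergence-line analysis that substitutes for Claim~\ref{complete}; neither is routine.
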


Previously Coskunuzer~\cite{cosk1} constructed an
example of a non-proper, stable, complete minimal plane
in $\HH^3$ that can be roughly described as a
collection of "parallel" geodesic planes connected via
"bridges at infinity". However, his
techniques do not generalize to construct non-proper,
non-zero constant mean curvature planes
in $\mathbb{H}^3$.

There is a general conjecture related to Theorem~\ref{main}
and the previously stated positive properness results. This conjecture states
that if $X$ is a simply-connected,
homogeneous three-manifold with Cheeger constant Ch$(X)$, then for
any $H\geq\frac12$Ch$(X)$, every  complete, connected
$H$-surface embedded in $X$ with positive injectivity
radius or  finite topology  is proper.
In the case of the Riemannian product $X=\HH \times \R$,
then Ch$(X)=\frac12$, and the validity of this
conjecture would imply that every complete, connected
$H$-surface embedded in $X$ with positive injectivity
radius or  finite topology  is properly embedded when $H\geq \frac12$.
In view of this conjecture and Theorem~\ref{main},  it is natural to ask the question:

\begin{quote}
{\em Given $H\in [0,\frac12)$, does there exist a complete, non-properly embedded
$H$-surface of finite topology in
$\HH \times \R$ ? }
\end{quote}

When $H=0$,
Rodr\'{\i}guez and Tinaglia~\cite{rodt1} have  constructed non-proper, complete
minimal planes embedded in $\mathbb{H}\times \R$.
However, their construction does  not generalize to
produce complete, non-proper planes embedded in $\HH \times \R$
with non-zero constant mean curvature.

\nocite{bt1}

\section{An outline of the construction}

In this section, we outline the construction of the
examples described in Theorem~\ref{main}.
Throughout the paper, we refer to an oriented surface
embedded in $\mathbb{H}^3$ with constant
mean curvature $H$   as an {\em $H$-surface}, and call it an {\em $H$-disk} if it is
simply-connected. After possibly reversing the orientation of an $H$-surface, we will always
assume  $H\geq 0$. Given a domain $\Omega\subset \mathbb{H}^3$  with
smooth boundary $\partial \Omega$, we say that
 $\partial \Omega$ is $H_0$-convex, $H_0\geq 0$, if after orienting
$\partial \Omega$ so that its unit normal is pointing into $\Omega$, then
$\inf_{\partial \Omega}H_{\partial \Omega}\geq H_0$, where $H_{\partial \Omega}$
denotes the mean curvature function of $\partial \Omega$.

We will work in $\mathbb{H}^3$ using the Poincaré ball model, that is
we consider $\mathbb{H}^3$ as the unit ball in $\rth$ and   its
ideal boundary $\partial_\infty \HH^3$ at infinity corresponds to the boundary of the ball.
Fix $H$ in $[0,1)$.
Given  $\lambda_1>0$ sufficiently large and $\lambda_2>\lambda_1$,
for any $\l \in [\l_1,\l_2]$, there exists a unique spherical $H$-catenoid
$\C^{\lambda}$ whose distance to its rotation axis,
which we assume is the   $z$-axis, is $\lambda$ in the hyperbolic metric, is invariant under
reflection in the $(x,y)$-plane  and the mean curvature vectors of
$\C^{\lambda}$ point toward the $z$-axis; see Figure~\ref{univcover1}
and the discussion in the Appendix for further details.

Let $W \subset \HH^3$ denote the closed region between
$\C^{\l_1}$ and $\C^{\l_2}$. By
Proposition~\ref{foliation} in the Appendix, the collection
$\cF=\{\C^{\lambda} \mid  \l \in [\l_1,\l_2] \}$ is a
foliation of $W$.
We will identify $W$ topologically with
$[\l_1,\l_2 ]\times \mathbb{S}^1 \times \R$ and its boundary
consists of the two $H$-surfaces
$\C^{\lambda_1}$ and $\C^{\lambda_2}$; in the next section this
identification is made explicit.

For $\lambda_1$ sufficiently large and for a fixed $\lambda_2>\lambda_1$
that is sufficiently close to $\lambda_1$,
we will construct the surface $\Sigma_H\subset W$ described in Theorem~\ref{main}
by creating a sequence of compact $H$-disks in $W$ whose interiors
converge to $\Sigma_H$ on compact subsets of $\Int(W)$. To do this, we will consider the
universal cover $\wt{W}=[\l_1,\l_2 ]\times\R \times \R$ of $W$,
which is an infinite slab with boundary
$H$-planes $\wt{\C}^{\lambda_1}$ and $\wt{\C}^{\lambda_2}$.
By construction, the mean curvature vectors  of the surface
 $\wt{\C}^{\lambda_2} \subset \partial \wt{W}$ point into
$\wt{W}$, while the mean curvature vectors of $\wt{\C}^{\lambda_1}$ point out of
$\wt{W}$; see Figure~\ref{univcover1}.

To create the compact sequence of $H$-disks we
first exhaust $\wt{W}$ by a certain increasing sequence of
compact domains $\Omega_n \subset \Omega_{n+1}$ such that
$\partial \Omega_n\setminus (\partial \Omega_n\cap \wt{\C}^{\lambda_1})$ is $H$-convex.
Next we choose an appropriate sequence of simple closed curves
$\Gamma_n$ on $\partial \Omega_n$ so that each $\Gamma_n$ is the boundary of an
$H$-disk $\Sigma_n$ embedded in $\Omega_n$, with each such
disk being a graph over its natural projection to
$[\l_1,\l_2 ]\times\{0\}\times  \R$; see Figure~\ref{Gamma_n}. A  compactness argument
then gives that a subsequence of the projected interiors of the surfaces $\Pi(\Sigma_n)\subset W$
converges to a complete
$H$-disk ${\Sigma}_H$ embedded in $ \Int(W)$, which we prove is a entire
graph over $(\l_1,\l_2 )\times\{0\}\times  \R$.
Finally we will show that $\Sigma_H$    satisfies the other conclusions
of Theorem~\ref{main}.

\section{The examples}

\subsection{The construction of the compact exhaustion of $\wt{W}$}
We begin by explaining in detail the construction of
the domains $\Omega_n$ briefly described in the previous
section. For the remainder of the paper we fix a particular  $H\in[0,1)$.

Let $c_H>0$ be the constant described in Lemma~\ref{Hcatenoid}
and Proposition~\ref{foliation} in the Appendix.
As described in the previous section, given $\lambda_2>\lambda_1\geq c_H$,
$W$  denotes the
closed region between $\C^{\l_1}$ and $\C^{\l_2}$. The
number $\lambda_2$ will be fixed later. The region $W$
is foliated by the collection
$\cF=\{\C^{\lambda} \mid  \l \in [\l_1,\l_2] \}$ of
spherical $H$-catenoids.
By using the
foliation $\cF$ of $W$, we
introduce {\em cylindrical coordinates} $(\lambda,\theta,z)$
on $W$ as follows.  The coordinate
$\lambda$ indicates that the point is in
$\C^\lambda$. The coordinate $\theta\in [0,2\pi)$
parameterizes the core circles of the catenoids in $\cF$ and corresponds to $\theta$
in cylindrical coordinates in $\rth$.
Finally, the $z$-coordinate of a point  $(\lambda,\theta,z)$ represents
the signed intrinsic distance on $\C^\lambda$ to the core circle of the catenoid
$\C^\lambda$ which lies in the $(x,y)$-plane, where the sign is taken to be positive if
the $z$-coordinate of the point in the ball model is positive, and otherwise
it is negative. Note that this choice of $z$-coordinate
is different from the one
of the ball model. It is clear that for points of $W$, $\lambda\in [\lambda_1,
\lambda_2]$, $\theta\in [0,2\pi)$ and $z\in (-\infty,\infty)$
in these coordinates; see Figure~\ref{univcover1}.

Let $\wt{W}$ be the universal cover of $W$, which is
topologically an infinite slab $[\l_1,\l_2] \times \R \times  \R$ with boundary
$H$-planes $\wt{\C}^{\lambda_1}$ and $\wt{\C}^{\lambda_2}$. We will use
the {\em induced} coordinates $(\lambda, \wt{\theta} , z)$
in $\wt{W}$. Namely, if $\Pi:\wt{W}\to W$ is
the covering map, then
\[
\Pi(\lambda,\wt{\theta}, z)=(\lambda,\wt{\theta} \;\; {\rm mod} \; 2\pi, \, z),
\]
i.e.,  $\Pi$ keeps fixed the $\lambda$ and $z$ coordinates,
and sends  $\wt{\theta}\in (-\infty,\infty)$ to the point
($\wt{\theta} \ \; {\rm mod} \; 2\pi$) corresponding
to a point in the core circle of the catenoid;
see Figure~\ref{univcover1}. $\wt W$ is endowed with the
metric induced by $W$ and  in these coordinates, for any
$\theta_0\in (-\infty,\infty)$, the map
\[
T_{\theta_0}\colon \wt{W}\to\wt{W},\quad T_{\theta_0}(\lambda, \wt{\theta}, z)=(\lambda, \wt{\theta}+\theta_0, z)
\]
is an isometry of $\wt{W}$ as it is induced by the isometry of
$\mathbb{H}^3$ which is a rotation by angle $\theta_0$
about the $z$-axis. In particular, $T_{2\pi n}$ is a
covering transformation for any $n$. We let $\partial_\theta$ denote the Killing field in
$W$ generated by the rotations about the $z$-axis and denote by $\partial_{\wt \theta}$
the related Killing field in $\wt W$ generated by the one-parameter group of
isometries $\{T_\theta\}_{\theta\in \R}$.

\begin{figure}[t]
\begin{center}
$\begin{array}{c@{\hspace{.2in}}c}

\relabelbox  {\epsfxsize=2in \epsfbox{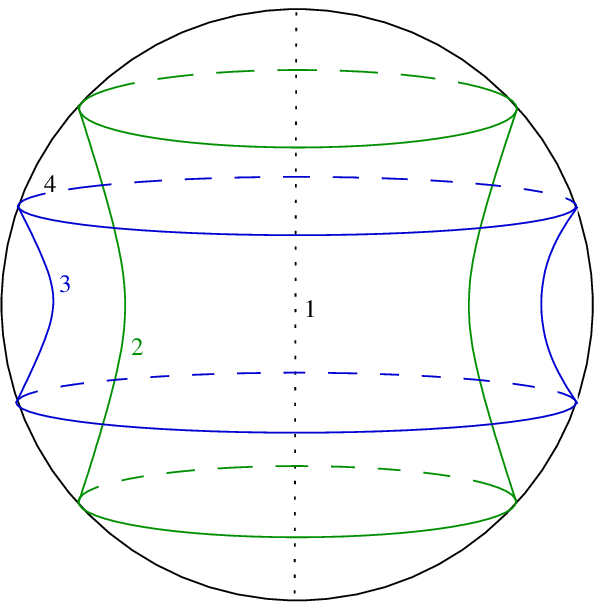}}
\relabel{1}{\footnotesize $z$-axis} \relabel{2}{\footnotesize \color{green} $\C^{\lambda_1}$}
\relabel{3}{\footnotesize \color{blue} $\C^{\lambda_2}$} \relabel{4}{\footnotesize $W$}
\endrelabelbox &

\relabelbox  {\epsfxsize=2.5in \epsfbox{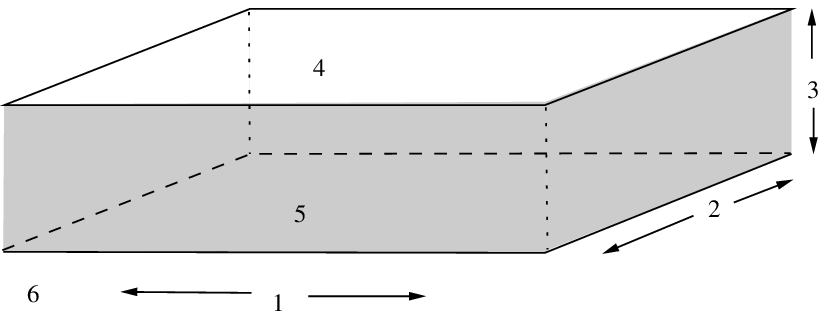}}
\relabel{1}{\footnotesize $\wt{\theta}$} \relabel{2}{\small $z$} \relabel{3}{\footnotesize
$\lambda$} \relabel{4}{\footnotesize $\wt{\C}^{\lambda_2}$} \relabel{5}{$\wt{W}$} \relabel{6}{\footnotesize $\wt{\C}^{\lambda_1}$}
\endrelabelbox \\ [0.4cm]
\end{array}$

\end{center}

\caption{ The induced coordinates $(\lambda, \wt{\theta}, z)$ in $\wt{W}$.} \label{univcover1}

\end{figure}

Since when $H>0$ the mean curvature vectors
of the boundary of $W$ point towards the rotation
axis, the mean curvature vectors point
into $\wt{W}$ on $\wt{\C}^{\lambda_2}$ and  out of $\wt{W}$ on
$\wt{\C}^{\lambda_1}$; thus, when considered to be a part of $\partial \wt W$,
$\wt \C^{\lambda_2}$ is $H$-convex, while $\wt \C^{\lambda_1}$ is not.

\begin{figure}[t]
\begin{center}
$\begin{array}{c@{\hspace{.2in}}c}

\relabelbox  {\epsfxsize=2in \epsfbox{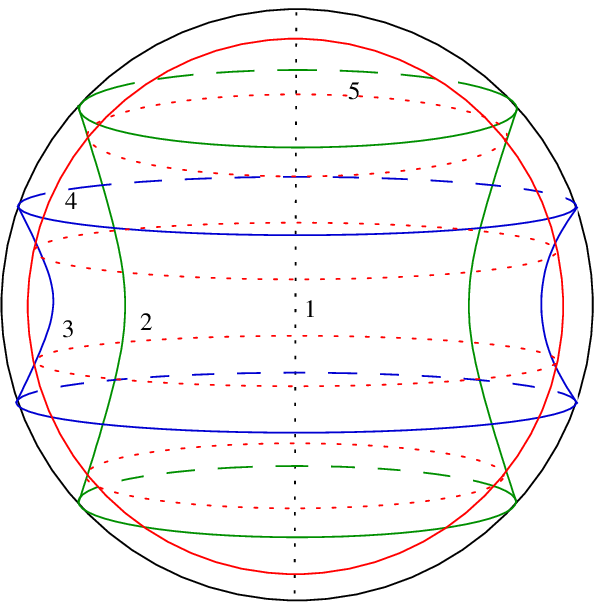}}
\relabel{1}{\footnotesize $z$-axis}
\relabel{2}{\footnotesize \color{green} $\C^{\lambda_1}$}
\relabel{3}{\footnotesize \color{blue} $\C^{\lambda_2}$}
\relabel{4}{\footnotesize $W_n$}
\relabel{5}{\footnotesize \color{red} $B_{R_n}$} \endrelabelbox

&

\relabelbox  {\epsfxsize=2.5in \epsfbox{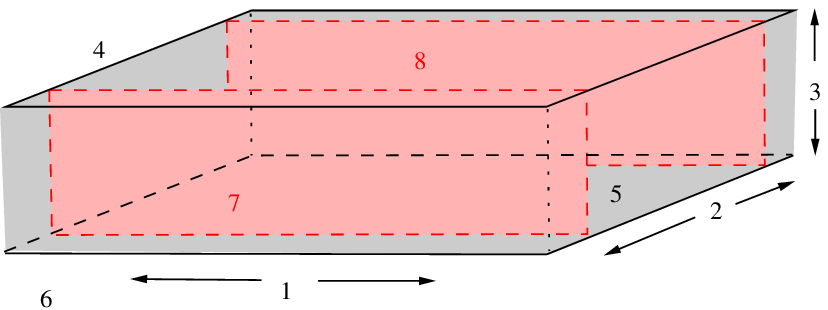}}
\relabel{1}{\footnotesize $\wt{\theta}$}
\relabel{2}{$z$}
\relabel{3}{\footnotesize $\lambda$}
\relabel{4}{\footnotesize $\wt{\C}^{\lambda_2}$}
\relabel{5}{\footnotesize $\wt{W}_n$}
\relabel{6}{\footnotesize $\wt{\C}^{\lambda_1}$}
\relabel{7}{\footnotesize \color{red} $\wt{\Z}_n^+$}
\relabel{8}{\footnotesize \color{red} $\wt{\Z}_n^-$} \endrelabelbox \\ [0.4cm]

\end{array}$

\end{center}

\caption{\label{univcover2} $W_n = W\cap B_{R_n}$ and $\wt{W}_n$ denotes its
universal cover. Note that $\partial \wt{W}_n\subset
\wt{\C}^{\lambda_1}\cup\wt{\C}^{\lambda_2}\cup\Z^+_n\cup\Z^-_n$.}

\end{figure}

As explained in Section 2, our next goal is to exhaust
$\wt{W}$ by an increasing sequence of
compact domains $\Omega_n \subset \Omega_{n+1}$
such that $\partial \Omega_n\setminus
(\partial \Omega_n\cap \C^{\lambda_1})$ is $H$-convex.
Let $R_n\nearrow \infty$ as $n\nearrow\infty$
and let $B_{R_n}$ be the closed geodesic ball in
$\mathbb{H}^3$ with center the origin. Let
$W_n = W\cap B_{R_n}\subset  W $ and let $\wt{W}_n$ be the universal
cover of $W_n$; see Figures~\ref{univcover1} and \ref{univcover2}.
Assume $R_1$ is chosen sufficiently large so that
every  $\wt{W}_n$ can be viewed as an
infinite tube in $\wt W$ which is bounded in the $z$-direction,
but unbounded in $\wt\theta$-direction. Then there
exists a sequence of  bounded continuous positive functions
\[
Z_n\colon [\lambda_1,\lambda_2]\to (0,\infty), \quad Z_{n+1}>Z_n
\] such that
\[
\wt W_n =\{(\lambda, \wt{\theta}, z)\in\wt W \text{ with } z\in [-Z_n(\lambda),Z_n(\lambda)]\}.
\]
Note that $Z_n$ does  not depend on $\wt\theta $ because $B_{R_n}$ is rotationally
symmetric. Let $\Z_n^\pm$ be the two annular
components of $\partial B_{R_n} \cap W$. The preimages of the
surfaces $\Z_n^\pm$ in the universal cover $\wt{W}$ are
\[
\wt{\Z}_n^\pm:=\{(\lambda, \wt{\theta}, \pm Z_n(\lambda)) \mid \lambda\in [\lambda_1,\lambda_2],
\wt \theta \in (-\infty,\infty)\}.
\]
Since the mean
curvature of $\partial B_{R_n}$ with inner pointing
unit normal is strictly greater than one, $\wt{\Z}_n^+\cup\wt{\Z}_n^-$
is $H$-convex as part of the boundary of $\wt W_n$.

The final and most difficult step in defining the piecewise-smooth
compact domains $\Omega_n$ is to bound
$\wt{W}_n$ in the $\wt \theta$-direction. In order
to do this, we will again use spherical $H$-catenoids.
Let $P^+=(0,-\ve,p_3)$ be a point on
$\Si$ for some small $\ve>0$ and let $P^-=(0,-\ve,-p_3)\in\Si$ be its
symmetric point in $\Si$ with respect to the $(x,y)$-plane.
Let $\gamma$ be
the geodesic in $\HH^3$ connecting $P^+$ and $P^-$ and let $\phi$
be the hyperbolic translation  along  the $y$-axis in
the ball model for $\HH^3$ and that
maps the $z$-axis to $\gamma$.

For $\ve>0$ chosen sufficiently small,
the asymptotic boundary circles of $\wh{\C}^{\lambda_1}=\phi(\C^{\l_1})$ intersect transversely the
 asymptotic boundary circles of ${\C}^{\lambda_1}$, and
  $\widehat{\C}^{\lambda_1}$
 intersects $\C^{\lambda_1}$ transversely with
$\wh{\C}^{\lambda_1}\cap \C^{\lambda_1}=l^+\cup l^-$, where $l^\pm$ is a pair of
infinite ``vertical" arcs in the intersecting
catenoids. 
See the proof of Proposition~\ref{prop:5.3}  in the Appendix for the details on the existence  of $l^\pm$
and for some details on the next argument.
Then after choosing $\lambda_2-\lambda_1$
sufficiently small, the intersection
$\wh{C}^{\lambda_1}\cap W$ consists of  two  thin infinite
strips $\T_+ $ and $\T_-$, where $\T_\pm$ looks like  $ l^\pm \times
(\lambda_1,\lambda_2)$. The strips $\T_+ $ and $\T_-$
separate $W$ into two components, say $W^+$ and
$W^-$, and $\T_+ \cup \T_-$ is $H$-convex as
boundary of one of these two components, say
$W^+$.

%
%
%
%

Notice that the strips $\T_+$ and $\T_-$ have
infinitely many lifts $\{\wt{\T}^n_+\}_{n\in \ZZ}$
and $\{\wt{\T}^n_-\}_{n\in \ZZ}$ in $\wt{W}$. In particular, if we fix a lift
$\wt{\T}^0_+$, then $\wt{\T}^n_+=T_{2\pi n}(\wt{\T}^0_+)$
for any $n\in \BZ$.  Similarly,
the same is true for $\wt{\T}^n_-$. We fix the lifts
$\wt{T}^0_+$ and $\wt{\T}^0_-$ in $\wt{W}$ so that
the mean curvature vectors are pointing
 into the region that they bound, and so that there is no other lift
$\wt{\T}^n_\pm$ between them. Then there exists a
function $ G\colon [\lambda_1,\lambda_2]\times
(-\infty,\infty) \to (0,\pi)$ such that
\[\wt{\T}^0_+=\{(\lambda, G(\lambda, z), z)\} \text{ and } \wt{\T}^0_-=\{(\lambda, -G(\lambda, z), z)\}\]
Moreover, let $G_n(\lambda,z)=G(\lambda,z) +2\pi n$. Then,
$\wt{\T}^n_+=\{(\lambda, G_n(\lambda, z), z)\}$
and $\wt{\T}^n_-=\{(\lambda,
-G_n(\lambda, z), z)\}$.

Finally, let $\Omega_n$ be the region in $\wt{W}_n$
between $\wt{\T}^n_+$ and $\wt{\T}^{-n}_-$. In particular,
\[
\Omega_n=\{ (\lambda,\wt{\theta}, z) \in\wt{W} \ | \ \wt{\theta}\in [-G_n(\lambda,z), G_n(\lambda,z)]
 \mbox{ and } z\in [-Z_n(\lambda),Z_n(\lambda)]
\}
\]
Hence, we have obtained an exhaustion of $\wt{W}$ by compact regions with the property
that $\partial \Omega_n\setminus (\partial \Omega_n\cap
\wt\C^{\lambda_1})$ is $H$-convex; see Figure~\ref{Omega_n}.

\begin{figure}[b]
\begin{center}
$\begin{array}{c@{\hspace{.1in}}c}

\relabelbox  {\epsfxsize=2.5in \epsfbox{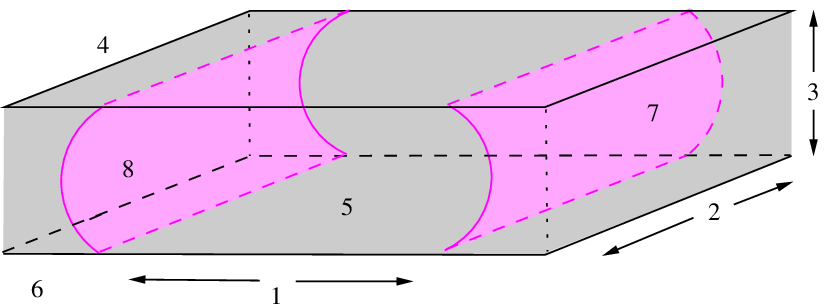}} \relabel{1}{\footnotesize $\wt{\theta}$} \relabel{2}{$z$} \relabel{3}{\scriptsize $\lambda$}
\relabel{4}{\footnotesize $\wt{\C}^{\lambda_2}$} \relabel{5}{\footnotesize $\wt{W}$} \relabel{6}{\footnotesize $\wt{\C}^{\lambda_1}$}
\relabel{7}{\footnotesize $\wt{\T}^n_+$} \relabel{8}{\footnotesize $\wt{\T}^{-n}_-$} \endrelabelbox &

\relabelbox  {\epsfxsize=2.5in \epsfbox{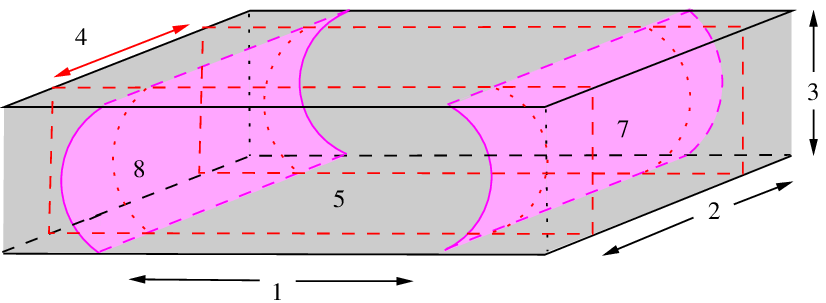}} \relabel{1}{\footnotesize $\wt{\theta}$} \relabel{2}{$z$} \relabel{3}{\scriptsize $\lambda$}
\relabel{4}{\scriptsize $\wt{W}_n$} \relabel{5}{\footnotesize $\Omega_n$}
\relabel{7}{\footnotesize $\wt{\T}^n_+$} \relabel{8}{\footnotesize $\wt{\T}^{-n}_-$} \endrelabelbox \\
[0.4cm]
\end{array}$

\end{center}

\caption{ $\Omega_n$ is the region in $W_n$ between $\wt{\T}^n_+$ and $\wt{\T}^{-n}_-$.} \label{Omega_n}

\end{figure}

\subsection{The sequence $\Sigma_n$ of graphical $H$-disks}

Our aim in this section is to construct a sequence of
compact $H$-disks $\Sigma_n \subset \Omega_n$ with
$\partial \Sigma_n \subset \partial \Omega_n$, which are
$\wt{\theta}$-graphs over their projections to
$[\l_1,\l_2 ]\times\{0\}\times  \R$.


Let
\[
\partial_* \Omega_n:=\partial \Omega_n\setminus
(\partial \Omega_n\cap \{\wt\C^{\lambda_1}\cup \wt\C^{\lambda_2}\})
\]
and let $\gamma$ be a piecewise smooth, embedded,
simple closed curve in $\partial_* \Omega_n$ that does
not bound a disk in $\partial_* \Omega_n$. Recall
that $\partial_* \Omega_n$ is piecewise smooth and
$H$-convex as part of the boundary of $\Omega_n$,
since the dihedral angles are less than $\pi$ at the corners.

Consider the following variational problem. Let $M$
be a compact surface embedded in $\Omega_n$
with $\partial M=\gamma\subset \partial_*
\Omega_n$. Since  $\Omega_n$ is simply-connected, $M$
separates $\Omega_n$ into two regions,
i.e., $\Omega_n - M = \Omega_M^+\cup\Omega_M^-$ where
$\Omega^+_M$ denotes the region that contains
$\wt\C^{\lambda_2}\cap\Omega_n$. Let $A(M)$ denote the area
of $M$ and let $V(M)$ denote the volume of the
region $\Omega_M^+$.  Then, let
\begin{equation}\label{Func-currents}
I(M)=A(M)+2HV(M).
\end{equation}

By working with integral currents, it is known that for any simple closed
essential curve $\gamma_n$ in $\partial_* \Omega_n$
there  exists a smooth (except at the 4 corners of $\gamma_n$), compact,
embedded $H$-surface $\Sigma_n\subset W_n$ with  $\text{Int}(\Sigma_n)\subset \text{Int}(W_n)$ and
$\partial\Sigma_n=\gamma_n$.
In fact,  $\Sigma_n$ can be chosen to be,
and we will assume it is, a minimizer for this variational
problem, i.e.,
$I(\Sigma_n)\leq I(M)$ for any
$M\subset \Omega_n$ with $\partial M =\gamma_n$; see for instance~\cite{alr1,ton1}.
Moreover,  $\Sigma_n$ separates $\Omega_n$ into two
regions and 
the mean curvature vectors of $\Sigma_n$ points ``down,'' namely into $\Omega_{\Sigma_n}^-$.

If $\lambda_{\Sigma_n}$ denotes the restriction of the
$\lambda$-coordinate function  to $\Sigma_n$, then the
following holds. If $P_+$ and $P_-$ are interior points
of $\Sigma_n$ where the function $\lambda_{\Sigma_n}$
obtains respectively its maximum and minimum value,
then the mean curvature vector at $P_+$ and $P_-$
points ``down'', toward $\wt\C^{\lambda_1}\cap\Omega_n$.

\begin{lemma}\label{slab}
Let $P^+$ (respectively $P^-$) be a point in
$\Sigma_n$ where the function $\lambda_{\Sigma_n}$ attains its
maximum (respectively minimum) value. Then, $P^+$
and $P^-$ cannot be in the interior of $\Sigma_n$
unless $\Sigma_n= \wt\C^{\lambda}\cap \Omega_n $ for a certain $\l \in (\l_1,\l_2)$.
\end{lemma}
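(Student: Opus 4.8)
The plan is to argue by the maximum principle, comparing $\Sigma_n$ near an interior extremum of $\lambda_{\Sigma_n}$ with the leaf $\wt\C^\lambda$ of the catenoid foliation $\wt\cF$ passing through that extremal point. First I would recall the key geometric fact recorded just before the statement: the mean curvature vector of $\Sigma_n$ always points ``down'', i.e.\ toward $\wt\C^{\lambda_1}$, and in particular at an interior maximum point $P^+$ of $\lambda_{\Sigma_n}$ it points in the direction of decreasing $\lambda$, while at an interior minimum point $P^-$ it again points toward $\wt\C^{\lambda_1}$, i.e.\ in the direction of increasing $\lambda$. This is exactly the sign configuration that makes the comparison work on the correct side.

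Next I would set up the comparison at, say, the maximum point $P^+$, with $\lambda^+ := \lambda_{\Sigma_n}(P^+)$. Since $P^+$ is an interior point of $\Sigma_n$ and $\lambda^+ < \lambda_2$ (otherwise $P^+$ would lie on $\wt\C^{\lambda_2}\subset\partial\Omega_n$, contradicting that it is interior; and $\lambda^+>\lambda_1$ similarly at the minimum), the leaf $\wt\C^{\lambda^+}$ of $\wt\cF$ is a smooth embedded $H$-surface through $P^+$, and near $P^+$ both $\Sigma_n$ and $\wt\C^{\lambda^+}$ are graphs over their common tangent plane at $P^+$ (the tangent planes agree because $\lambda_{\Sigma_n}$ has a critical point there, so $\Sigma_n$ is tangent to the level set $\{\lambda=\lambda^+\}=\wt\C^{\lambda^+}$). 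Because $\lambda_{\Sigma_n}\le \lambda^+$ on all of $\Sigma_n$, the surface $\Sigma_n$ lies locally on the side of $\wt\C^{\lambda^+}$ where $\lambda\le\lambda^+$, i.e.\ on the side toward $\wt\C^{\lambda_1}$. Now both surfaces have mean curvature $H$, and I claim their mean curvature vectors at $P^+$ point to the same side: for $\wt\C^{\lambda^+}$ the mean curvature vector points toward the rotation axis, which is the direction of decreasing $\lambda$ (toward $\wt\C^{\lambda_1}$); for $\Sigma_n$ the mean curvature vector points ``down'' toward $\wt\C^{\lambda_1}$ by the property recalled above. Hence with the common unit normal chosen to point toward $\wt\C^{\lambda_1}$, both graphs satisfy the same quasilinear elliptic $H$-surface equation, and one lies above the other with equality at $P^+$. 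By the interior geometric maximum principle (Hopf), the two surfaces coincide in a neighborhood of $P^+$, and then by a standard connectedness/analytic-continuation argument along $\Sigma_n$ — the set where $\Sigma_n$ agrees with $\wt\C^{\lambda^+}$ is open and closed in $\Int(\Sigma_n)$ — we get $\Int(\Sigma_n)\subset \wt\C^{\lambda^+}$, whence $\Sigma_n=\wt\C^{\lambda^+}\cap\Omega_n$ with $\lambda=\lambda^+\in(\lambda_1,\lambda_2)$. The argument at the minimum point $P^-$ is identical with the roles reversed: $\Sigma_n$ lies locally on the $\lambda\ge\lambda^-$ side of $\wt\C^{\lambda^-}$, and the mean curvature vectors again agree (both pointing toward $\wt\C^{\lambda_1}$, now the direction of increasing $\lambda$ relative to that side), so the same maximum principle applies.

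The main obstacle is bookkeeping the orientations and curvature-vector directions carefully enough to be sure that the two $H$-surfaces are being compared \emph{on the correct side} — this is where the hypothesis that the mean curvature vector of $\Sigma_n$ points ``down'' is essential, and where the fact (from Proposition~\ref{foliation} in the Appendix) that $\wt\cF$ is a foliation by $H$-catenoids with mean curvature vectors pointing toward the axis gets used. A secondary point to be careful about is that $P^\pm$ is assumed interior to $\Sigma_n$, so the maximum principle may be applied in the interior and one need not worry about the $4$ corners of $\gamma_n$ or boundary behavior; one only needs that $\lambda^\pm\in(\lambda_1,\lambda_2)$, which follows because an extremal value equal to $\lambda_1$ or $\lambda_2$ would force $P^\pm\in\partial\Omega_n$. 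Everything else — the local graph representation, the elliptic PDE comparison, and the open-closed continuation — is routine.
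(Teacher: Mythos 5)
Your proof is correct and follows essentially the same route as the paper, which simply invokes the maximum principle together with the observation that the mean curvature vector of $\Sigma_n$ points toward $\wt\C^{\lambda_1}$ and the fact that $\{\wt\C^{\lambda}\cap\Omega_n\}_{\lambda\in[\lambda_1,\lambda_2]}$ foliates $\Omega_n$; you have merely spelled out the tangency, one-sidedness, and orientation bookkeeping that the paper leaves implicit. The only slight imprecision is that $\lambda^\pm\in(\lambda_1,\lambda_2)$ follows from $\Int(\Sigma_n)\subset\Int(W_n)$ rather than directly from $P^\pm$ being interior to $\Sigma_n$, but this does not affect the argument.
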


\begin{proof}
By applying the maximum principle for constant mean
curvature surfaces, this lemma follows from the previous
observation and the fact that the collection
$$\wt \cF_n=\{\wt \C^{\lambda}\cap \Omega_n \mid  \l \in [\l_1,\l_2] \}$$
foliates $\Omega_n$.
\end{proof}

\subsection{Choosing the right boundary curve $\Gamma_n$}\label{boundary}
Let $\ov{\wt{W}}$ be the hyperbolic compactification of $\wt{W}$,
which is a covering  of the related
compactification of  $W$ when viewed to be a subset of $\ov{\HH}^3$.

For each $n\in \N$ large, we will construct a simple
closed curve $\Gamma_n$ in $\partial_* \Omega_n$ such that
the minimizer surface  $\Sigma_n \subset \Omega_n$
for the functional $I$ in~\eqref{Func-currents} with
$\partial \Sigma_n = \G_n$ is a $\wt{\theta}$-graph over its projection to
$[\l_1,\l_2 ]\times\{0\}\times  \R$.

Let $\Gamma_n$
be the union of four arcs in $\partial_* \Omega_n$,
\[
\Gamma_n:=\alpha^n_+\cup\beta^n_+\cup\alpha^n_-\cup\beta^n_-
\]
where $\alpha^n_+\subset \wt\C^{\lambda^+_n}\cap \wt \T_+^n$ with
$\lambda^+_n\nearrow \lambda_2$ and  with its endpoints on $\wt \Z_n^\pm$, and
$\alpha^n_-\subset \wt\C^{\lambda^-_n}\cap\wt \T_-^{-n}$
with $\lambda^-_n\searrow \lambda_1$
and with its endpoints on $\wt \Z_n^\pm$. The curve
$\beta^n_+\subset \wt\Z_n^+$ connects the endpoints of
$\alpha^n_+$ and $\alpha^n_-$ that are
contained in $\wt\Z_n^+$ while the curve
$\beta^n_-\subset \wt\Z_n^-$ connects the endpoints
of $\alpha^n_+$ and $\alpha^n_-$ that are
contained in $\wt\Z_n^-$; see Figure \ref{Gamma_n}.

\begin{figure}[t]
\begin{center}
$\begin{array}{c@{\hspace{.2in}}c}

\relabelbox  {\epsfxsize=2.5in \epsfbox{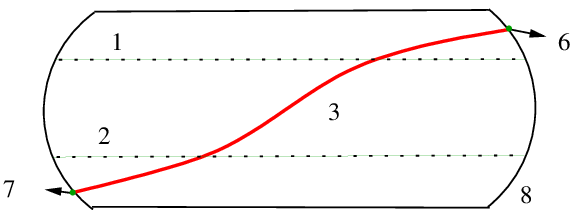}} \relabel{1}{\footnotesize $\wt{\C}^{\lambda_0^+}$} \relabel{2}{\footnotesize
$\wt{\C}^{\lambda_0^-}$} \relabel{3}{\footnotesize $\beta^n_+$}  \relabel{6}{\footnotesize $\alpha^n_+$} \relabel{7}{\footnotesize $\alpha^n_-$} \relabel{8}{\footnotesize $\wt{\Z}_n^+$}
\endrelabelbox &

\relabelbox  {\epsfxsize=2.5in \epsfbox{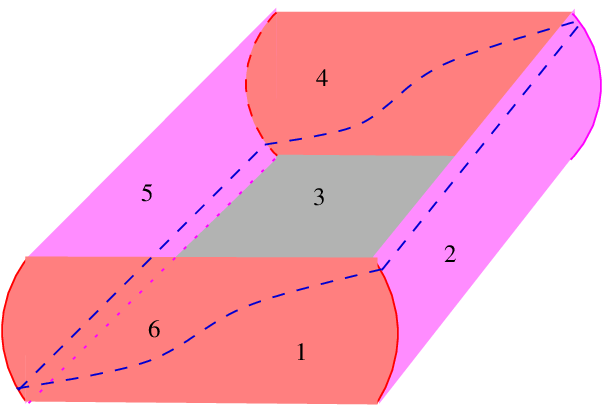}} \relabel{1}{\footnotesize $\wt{\Z}_n^+$} \relabel{2}{\footnotesize $T^n_+$}
\relabel{3}{\footnotesize $\Omega_n$} \relabel{4}{\footnotesize $\wt{\Z}_n^-$} \relabel{5}{\footnotesize $T^n_-$} \relabel{6}{\footnotesize $\Gamma_n$} \endrelabelbox \\ [0.4cm]

\end{array}$

\end{center}

\caption{\label{Gamma_n} In the left, $\beta^n_+$  is
pictured in $\wt{\Z}_n^+$. In the right, $\Gamma_n$ curve is described in
$\partial \Omega_n$.}

\end{figure}

Moreover, we chose $\beta^n_+$ and $\beta^n_-$ so that
they are smooth graphs 
in the $\wt \theta$-variable
with positive slope; see Figure~\ref{Gamma_n}.
We will assume that the curves $\beta^n_+$ and $\beta^n_-$ converge respectively
to a pair of curves $ \beta_+$, $ \beta_{-}$
in $\partial _\infty \wt{W}$.
With these choices, if we denote by $\widehat \beta^n_\pm$
the projections of $\beta^n_\pm$ in $\mathbb H^3\cap W_n$, the curves
$\widehat \beta^n_\pm$ are embedded curves contained in $ \Z_n^\pm$.
Finally, we require that $\widehat \beta^n_\pm$ to
converge to a pair of infinite smooth spiralling curves $\widehat \beta_\pm$
in  the pair of compact annuli $A^+,A^-$ in $\Si\cap \ov{W}$,
each of which is a graph of some smooth function $\pm\lambda( {\theta})$ with
positive slope and the graphs converge to the asymptotic boundary curves of $\C^{\l_1},\C^{\l_2}$.
Here $\ov{W}$ denotes the union of $W$ with its limit points in $ \Si$.
We will also assume that  $\widehat \beta_\pm$ have positive bounded geodesic curvature
and   the reflection in
the $(x,y)$-plane interchanges them; see Figure~\ref{spiral}.

We will next make  some further restrictions
on the choices of $\beta^n_+$ and $\beta^n_-$.
For each  $ p\in \widehat \beta_+$, let $C^1(p)$ and
$C^2(p)$ be the two circles in
$ \partial_\infty W\cap \{z>0\}\subset \Si$ of maximal radius such that
$C^1(p)\cap C^2(p)= p$ and such that
the pairwise disjoint open disks that they bound
in $\partial_\infty \HH^3$ are disjoint from $ \wh{\beta}_+$.
Note that these circles are on ``opposite''sides
of $\widehat \beta_+$ at $p$.
Denote by $\Delta^1(p)$ and $\Delta^2(p) $
rotationally symmetric open $H$-disks in $\HH^3$
with boundaries respectively $C^1(p)$ and $C^2(p)$; by the arguments in the
proof of Lemma~\ref{lem:3.1} below these disks must be disjoint from
 $\C^{\lambda_1}\cup \C^{\l_2}$. Furthermore, the disks are chosen so that
 the mean curvature
vector  of $\Delta^i(p)$  points into the component of
$W\setminus \cup_{i=1}^2 \Delta^i(p)$ that contains $\C^{\lambda_1}\cup \C^{\l_2}$, $i=1,2$.
The disks $\Delta^1(p)$ and $\Delta^2(p)$ can be defined in an
analogous way when $ p\in \widehat \beta_-$.

\begin{definition} \label{def:3.1} $ \displaystyle \Gamma=\widehat \beta_+\cup\widehat \beta_-$
and $ \displaystyle \wt{\Gamma}= \beta_+\cup \beta_-.$
\end{definition}

\begin{figure}[h]
\begin{center}
$\begin{array}{c@{\hspace{.2in}}c}

\relabelbox  {\epsfxsize=2.5in \epsfbox{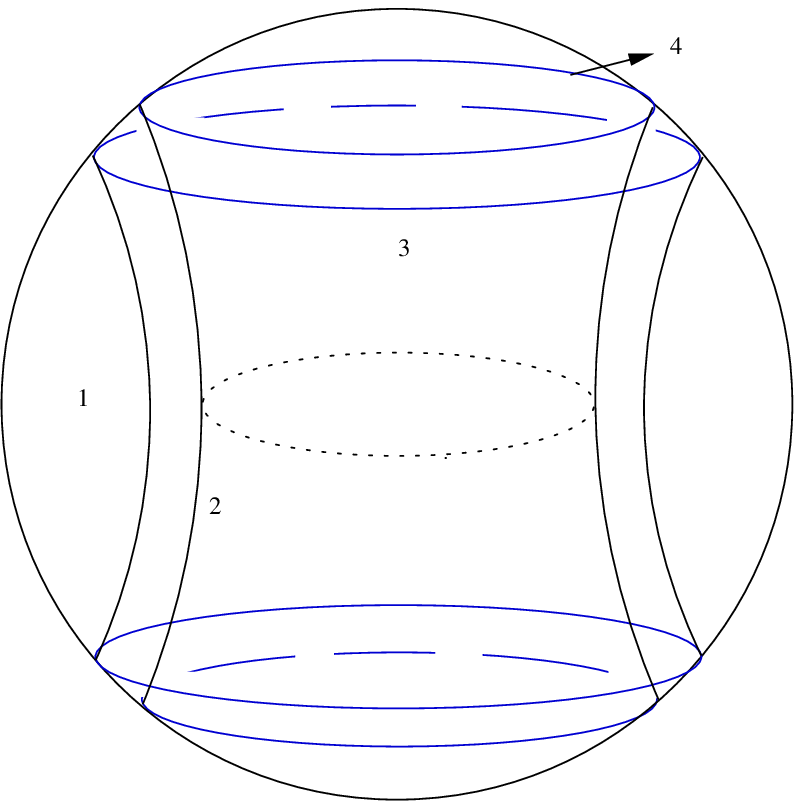}} \relabel{1}{\footnotesize $\C^{\l_1}$} \relabel{2}{\footnotesize $\C^{\l_2}$}
\relabel{3}{\footnotesize $\gamma_1^+$}  \relabel{4}{\footnotesize $\gamma_2^+$}  \endrelabelbox &

\relabelbox  {\epsfxsize=2.5in \epsfbox{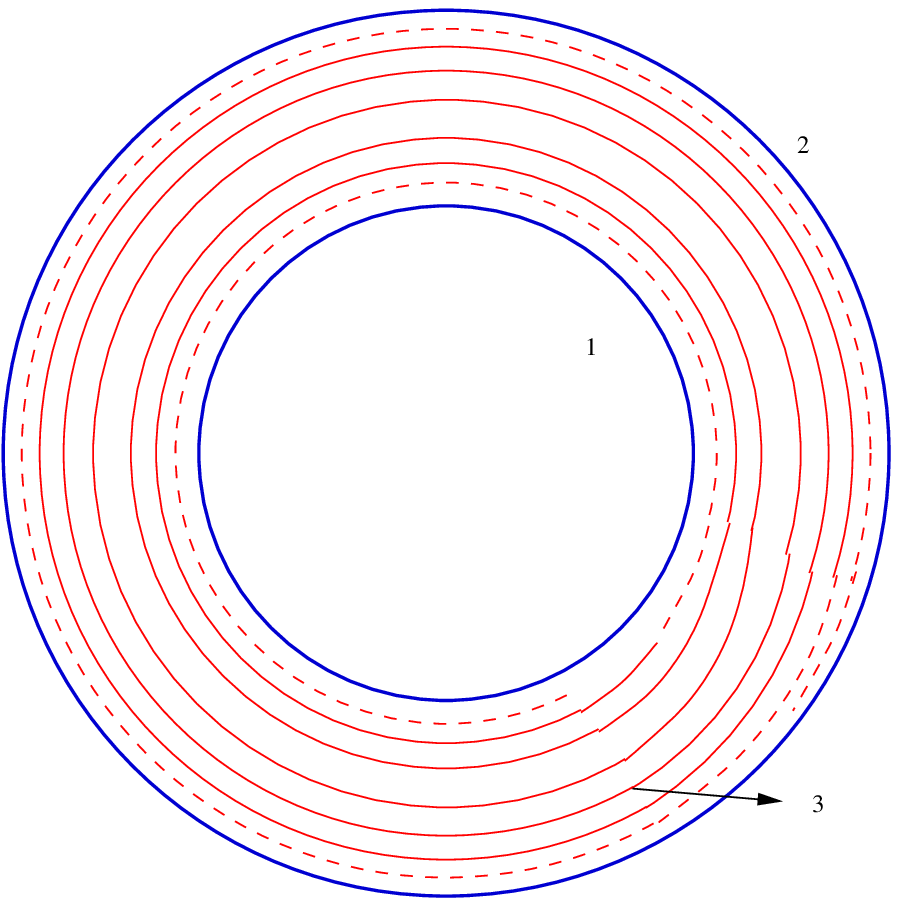}} \relabel{1}{\footnotesize $\gamma_1^+$} \relabel{2}{\footnotesize $\gamma_2^+$} \relabel{3}{\footnotesize $\Gamma^+$}
 \endrelabelbox \\
[0.4cm]
\end{array}$

\end{center}
\caption{\label{spiral} Let $\PI \C^{\l_i}=\gamma_i^+\cup\gamma_i^-$, $i=1,2$, and
$\Gamma=\Gamma^+\cup \Gamma^-$. Then, $\Gamma^+$ is an infinite line
in $\Si$ spiraling into $\gamma^+_1$ in one end, and spiraling into $\gamma^+_2$ in the other end. }
\end{figure}

Abusing the notation,  for each $p\in \wt{\G}$,
we let $\Delta^1(p),\Delta^2(p)$ denotes the lifts at $p$ of the related
disks $\Delta^1(\Pi(p))$, $\Delta^2(\Pi(p))$.
The final condition on the convergence
of $\beta^n_+$ and $\beta^n_-$ to $\beta_+$ and $\beta_-$ is that
\[
\beta^n_+\cap \bigcup _{p\in\beta_+}[\Delta^1(p)\cup\Delta^2(p)]= \mbox{\O},
\quad \beta^n_-\cap \bigcup _{p\in\beta_-}[\Delta^1(p)\cup\Delta^2(p)]=\mbox{\O},
\]
for all $n$ sufficiently large.

Necessarily, if $\lambda_{\beta_\pm}$ denotes the
 restriction of the coordinate function $\lambda$ to
 $\beta_\pm$ then $\lambda_{\beta_\pm}(\wt{\theta})\to \lambda_2$ as $\wt{\theta}\to +\infty$ and
$\lambda_{\beta_\pm}(\wt{\theta})\to \lambda_1$ as $\wt{\theta}\to -\infty$, which means
that they are spiralling toward
$[\partial \C^{\lambda_1}\cup \partial \C^{\lambda_2}]\subset \Si $, e.g., $\lambda(\wt{\theta})
= \lambda_1+ (\lambda_2-\lambda_1)\dfrac{2\arctan{\wt{\theta}}+\pi}{2\pi}$.

\begin{lemma} \label{lem:3.1}
For all $p\in \G$ and all $n$ sufficiently large,
the compact surfaces $\Pi(\Sigma_n)$ are disjoint from
$\Delta^1(p)\cup \Delta^2(p) $.  In particular, for $n$ sufficiently  large,
the compact surfaces $\Pi(\Sigma_n)$ are disjoint from
 the two components of
 $\HH^3 \setminus [\Delta^1(p)\cup \Delta^2(p)] $ that do not contain the origin.
Furthermore,  the limit set of the closed set
 $\ov{\cup_{n=1}^\infty \Pi(\Sigma_n)}\subset \HH^3$ in $\partial_\infty \HH^3$
 must be  contained  in the closed set $\ov{\G}\subset \partial_\infty \HH^3$.
 \end{lemma}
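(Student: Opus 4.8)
The plan is to use the $H$-disks $\Delta^i(p)$ as barriers. First, I would observe that each $\Delta^i(p)$ together with the disk it bounds in $\partial_\infty \HH^3$ divides $\overline{\HH}^3$ into two regions; let $E^i(p)$ be the closed region not containing the origin (equivalently, the region on the side toward which the mean curvature vector of $\Delta^i(p)$ does \emph{not} point). The key geometric input is the last displayed condition on the convergence of $\beta^n_\pm$ to $\beta_\pm$, which guarantees that for $n$ large the boundary curve $\Gamma_n = \partial \Sigma_n$ (more precisely its projection, or rather $\Pi(\Sigma_n)$'s boundary $\widehat\beta^n_\pm \cup \alpha^n_\pm$) is disjoint from $\Delta^1(p) \cup \Delta^2(p)$: the $\widehat\beta^n_\pm$ parts are disjoint by that hypothesis, and the $\alpha^n_\pm$ parts lie on $\C^{\lambda^\pm_n}$ with $\lambda^\pm_n$ close to $\lambda_{1}, \lambda_2$, which are disjoint from the $\Delta^i(p)$ by the remark (to be proved here) that these disks miss $\C^{\lambda_1}\cup\C^{\lambda_2}$. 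Hence $\partial \Pi(\Sigma_n) \cap E^i(p) = \emptyset$.

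Second, I would run the standard mean-curvature comparison (Alexandrov-type / maximum principle) argument: suppose for contradiction that $\Pi(\Sigma_n) \cap E^i(p) \neq \emptyset$ for some $i$ and some large $n$. Push the $H$-disk $\Delta^i(p)$ off itself by the ambient isometries (rotations about the axis of $\Delta^i(p)$, or a continuous family of such rotationally symmetric $H$-disks with shrinking boundary circles foliating $E^i(p)$) until it is disjoint from the compact surface $\Pi(\Sigma_n)$, then bring it back. Since $\partial \Pi(\Sigma_n)$ never enters $E^i(p)$, a first point of contact must be an interior tangency of $\Pi(\Sigma_n)$ with some member of this foliating family; because both surfaces have mean curvature $H$ and the mean curvature vectors are consistently oriented (the vector of $\Delta^i(p)$ points away from $E^i(p)$, and by the construction $\Pi(\Sigma_n)$ lies locally on the $E^i(p)$-side), the constant-mean-curvature maximum principle forces the two surfaces to coincide — contradicting, e.g., that $\Pi(\Sigma_n)$ is compact with boundary outside $E^i(p)$ while the leaves are rotationally symmetric. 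This gives $\Pi(\Sigma_n) \cap E^i(p) = \emptyset$, i.e., $\Pi(\Sigma_n)$ is disjoint from $\Delta^1(p)\cup\Delta^2(p)$ and from the two components of $\HH^3 \setminus [\Delta^1(p)\cup\Delta^2(p)]$ not containing the origin, which is the first two assertions.

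Third, for the limit-set statement: let $q \in \partial_\infty \HH^3$ be a limit point of $\overline{\bigcup_n \Pi(\Sigma_n)}$ and suppose $q \notin \overline{\G}$. Since $\overline{\G} = \widehat\beta_+ \cup \widehat\beta_- \cup \partial_\infty\C^{\lambda_1}\cup\partial_\infty\C^{\lambda_2}$ is closed in $\partial_\infty W$, and any limit point of the $\Pi(\Sigma_n)$ lies in $\partial_\infty W$ (they all live in $W$), there is a small ball $B$ around $q$ in $\overline{\HH}^3$ meeting $\partial_\infty\HH^3$ only in $\partial_\infty W \setminus \overline{\G}$. Every point of $\partial_\infty W \cap B$ that is not on $\widehat\beta_\pm$ lies in the open disk bounded by $C^i(p)$ for some $p \in \widehat\beta_\pm$ and some $i$ — this is exactly the defining maximality property of the circles $C^1(p), C^2(p)$, which were taken to be the largest circles through $p$ whose bounded disks avoid $\widehat\beta_\pm$; so $q$ lies in $\Delta^i(p) \cup E^i(p)$ for an appropriate $p,i$, hence in $E^i(p)$ in the compactified picture. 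But then points of $\Pi(\Sigma_n)$ near $q$ would lie in $E^i(p)$ for $n$ large, contradicting the first part. Therefore $q \in \overline{\G}$.

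\textbf{Main obstacle.} The delicate point is the maximum-principle step in the second paragraph: one must verify that the family of rotationally symmetric $H$-disks used to sweep $E^i(p)$ genuinely foliates that region and has its mean curvature vectors coherently oriented relative to $\Pi(\Sigma_n)$, so that a first interior contact really is a one-sided tangency with matching $H$ and matching orientation. This is where the careful choice in the construction — that the mean curvature vector of $\Delta^i(p)$ points into the component containing $\C^{\lambda_1}\cup\C^{\lambda_2}$ (hence toward the origin side), together with the orientation of $\Pi(\Sigma_n)$ inherited from $\Sigma_n$ — is used. The auxiliary claim that $\Delta^1(p), \Delta^2(p)$ are disjoint from $\C^{\lambda_1}\cup\C^{\lambda_2}$ is handled by the same comparison argument applied to the catenoids (whose asymptotic circles are disjoint from the bounded disks of $C^i(p)$ by the spiralling hypothesis on $\widehat\beta_\pm$), and should be recorded first since it is invoked above; everything else is bookkeeping with the cylindrical coordinates and the hyperbolic compactification.
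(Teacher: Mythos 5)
Your barrier argument for the first statement is essentially the paper's proof: the paper likewise foliates the closure of the component of $\HH^3\setminus\Delta^1(p)$ away from the origin by rotationally symmetric $H$-disks $D(t)$ with $D(0)=\Delta^1(p)$, takes the largest $t_0$ with $D(t_0)\cap\Pi(\Sigma_n)\neq\mbox{\O}$, and derives a contradiction from the interior tangency; your extra care about why $\partial\Pi(\Sigma_n)$ misses the disks (the displayed condition for $\widehat\beta^n_\pm$, plus disjointness of $\Delta^i(p)$ from the catenoids for the $\alpha^n_\pm$ pieces) is a useful elaboration of the paper's one-line appeal to ``the construction of the curves.'' Where you genuinely diverge is the limit-set statement. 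The paper does not reuse the disks $\Delta^i(p)$: given $q\notin\ov{\G}$ it builds a \emph{fresh} rotationally symmetric $H$-disk $D^q$ whose asymptotic circle is centered at $q$ and small enough to miss $\cup_n\Pi(\partial\Sigma_n)$, then runs the same sweep to conclude $q$ is not a limit point. Your route instead needs the covering claim that every $q$ in the open annuli off $\ov{\G}$ lies in the open disk bounded by some $C^i(p)$, and your justification (``exactly the defining maximality property'') is too quick: you must take the largest disk centered at $q$ disjoint from $\widehat\beta_\pm$, check that its closure meets $\widehat\beta_\pm$ itself rather than only the limit circles $\partial_\infty\C^{\l_1}\cup\partial_\infty\C^{\l_2}$ (true, because turns of the spiral separate $q$ from those circles, so the nearest point of $\ov{\widehat\beta_\pm}$ to $q$ lies on the spiral), and then use that two circles through $p$ tangent to $\widehat\beta_\pm$ at $p$ from the same side are nested, so maximality of $C^i(p)$ forces it to contain the disk centered at $q$. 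With that lemma supplied your argument closes, and it has the small advantage of not introducing new barriers; the paper's version buys simplicity by only requiring that $q$ have positive distance from the boundary curves, with no covering statement needed. Finally, the orientation worry you flag as the ``main obstacle'' is not actually an issue: at the last point of contact $\Pi(\Sigma_n)$ lies on the mean-convex side of $D(t_0)$, so the comparison principle already forces the mean curvature of $\Pi(\Sigma_n)$ with respect to the normal pointing away from that side to equal $+H$, and the strong maximum principle then gives local coincidence without any a priori knowledge of the orientation of $\Sigma_n$.
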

\begin{proof}
The proof will  follow from a simple application of the maximum principle.
Fix $p\in \G$. By the construction of the curves
$\G_n=\partial \Sigma_n$, for $n$ sufficiently  large,
$\Pi(\partial \Sigma_n)\cap [\Delta^1(p)\cup \Delta^2(p)]=\mbox{\O}$.
If $\Pi(\Sigma_n)\cap [\Delta^1(p)\cup \Delta^2(p)]\neq \mbox{\O}$,
then one of the two disks,
say $\Delta^1(p)$, intersects $\Pi(\Sigma_n)$
 at some point.  Note that the closure $B$
of the component of $\HH^3 -\Delta^1(p)$ that
 is disjoint from the origin is
foliated by rotationally symmetric open disks $D(t)$ of  constant
mean curvature $H$ each of which is properly embedded in $\HH^3$,
where $t\in [0,\infty)$ and  $D(0)=\Delta^1(p).$
Since for every $t\geq 0$ and $n$ sufficiently large, $ D(t)$ is
disjoint from $\Pi(\partial \Sigma_{n})$ and $\Pi( \Sigma_{n})$ is compact,
there exists a largest non-negative number $t_0$ such that
$D(t_0)$ intersects $\Pi(\Sigma_n)$. Therefore $D(t_0)$
locally lies on one side of $\Pi(\Sigma_n)$
at an interior point of intersection. This contradicts the maximum principle and proves the
first statement in the lemma.
The second statement of Lemma~\ref{lem:3.1}
follows immediately from the first statement.

After viewing $\ov{\HH}^3$ with the closed unit ball metric,
let $q\in \partial_\infty \HH^3 \setminus
([\partial_\infty \C^{\l_1}\cup \partial_\infty \C^{\l_2} \cup \G]=\ov{\G})$.
By construction, the distance from
$q$ to $\cup_{i=1}^\infty \Pi(\partial \Sigma_n)$ is positive
in the closed ball metric on $\ov{\HH}^3$.
The arguments in the first paragraph of this proof using the maximum
principle  show  that there exists a disk $D^q\subset \HH^3$
of revolution and constant mean curvature $H$, with boundary circle
in $\partial_\infty \HH^3$ centered at $q$, that is disjoint from
$\cup_{n=1}^\infty \Sigma_n$.  The existence of $D^q$ implies that
$q$ is not in  the closure of $\cup_{n=1}^\infty \Pi(\Sigma_n)$
in   $\ov{\HH}^3$.  This completes the proof of the lemma.
\end{proof}


\section{Constructing the surface $\Sigma_H$} \label{sec4} In this section we construct
$\Sigma_H$ and finish the proof of Theorem~\ref{main}.
\begin{lemma} \label{translation}
Let $\Gamma_n$ be as described in Section~\ref{boundary} and let
$E_n=\Omega_n\cap([\l_n^{-},\l_n^+]\times\{0\}\times   \R )$. Then  $\Sigma_n$ is a
 $\wt{\theta}$-graph over the compact disk $E_n$. In particular, the related
 Jacobi function $J_n$ on $\Sigma_n$ induced by the inner product of
 the unit normal field to $\Sigma_n$ with the Killing field
 $ \partial_{\wt{\theta}}$
 is positive in the interior of $\Sigma_n$.
\end{lemma}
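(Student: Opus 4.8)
The plan is to establish that $\Sigma_n$ is a $\wt\theta$--graph by an Alexandrov--type moving--planes argument with respect to the one--parameter family of reflections of $\wt W$ that reverse the $\wt\theta$--coordinate, and then to deduce the positivity of $J_n$ from the strong maximum principle for the Jacobi operator. I first record three facts that confine $\Sigma_n$. Since $\G_n$ is not the boundary of a piece of a leaf of $\cF$, Lemma~\ref{slab} forces the coordinate function $\l_{\Sigma_n}$ to attain its maximum and its minimum on $\partial\Sigma_n=\G_n$, so $\l_{\Sigma_n}\in[\l_n^-,\l_n^+]\subset(\l_1,\l_2)$; in particular $\Sigma_n$ is disjoint from $\wt\C^{\l_1}\cup\wt\C^{\l_2}$. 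Combining this with the $H$--convexity of $\partial_*\Omega_n$ and the fact that $\Sigma_n$ minimizes $I$, a standard barrier argument shows $\Int(\Sigma_n)\subset\Int(\Omega_n)$. Finally, $\Sigma_n$ is compact, hence $\Sigma_n\subset\{|\wt\theta|\le C_n\}$ for some $C_n>0$, and by the construction in Section~\ref{boundary} the projection $\pi\colon(\l,\wt\theta,z)\mapsto(\l,0,z)$ maps $\G_n$ homeomorphically onto $\partial E_n$.

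For $t\in\R$ let $R_t\colon\wt W\to\wt W$ be $R_t(\l,\wt\theta,z)=(\l,2t-\wt\theta,z)$. This is an isometry of $\wt W$, being the lift of the isometry of $W$ induced by the reflection of $\HH^3$ in the totally geodesic plane containing the $z$--axis and the ray $\theta=t$; hence $R_t$ fixes the functions $\l$ and $z$, permutes the leaves of $\cF$, and carries $H$--surfaces to $H$--surfaces. Two elementary observations are the crux. First, because $G_n>0$ on $[\l_1,\l_2]\times\R$, for every $t\ge0$ the reflected cap $R_t\bigl(\Sigma_n\cap\{\wt\theta\ge t\}\bigr)$ again lies inside $\Omega_n$, since reflecting $\wt\theta\in[t,G_n(\l,z)]$ yields $\wt\theta\in[2t-G_n(\l,z),\,t]\subset[-G_n(\l,z),G_n(\l,z)]$. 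Second, because $\G_n$ is a graph over $\partial E_n$ whose arcs on $\wt\Z_n^\pm$ have positive slope in $\wt\theta$ while its remaining arcs lie on single leaves of $\cF$ (which $R_t$ preserves), the curve $R_t\bigl(\G_n\cap\{\wt\theta\ge t\}\bigr)$ meets $\G_n$ only along $\{\wt\theta=t\}$ and otherwise has its interior in $\Int(\Omega_n)$. The symmetric statements hold for $t\le0$ with $\{\wt\theta\ge t\}$ replaced by $\{\wt\theta\le t\}$.

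Now run the moving--planes argument, decreasing $t$ from $C_n$, where $\Sigma_n\cap\{\wt\theta\ge t\}$ is empty. Using the two observations, the inclusion $\Int(\Sigma_n)\subset\Int(\Omega_n)$, the interior strong maximum principle for $H$--surfaces, and the Hopf boundary lemma along $\{\wt\theta=t\}$ --- the sign convention that the mean curvature vector of $\Sigma_n$ points into $\Omega^-_{\Sigma_n}$ being what makes the comparisons between $\Sigma_n$ and the reflected caps consistent --- one shows inductively that for all $t\in[0,C_n]$ the reflected cap $R_t(\Sigma_n\cap\{\wt\theta\ge t\})$ lies on one fixed side of $\Sigma_n$ and meets it only in $\{\wt\theta=t\}$; symmetrically $R_t(\Sigma_n\cap\{\wt\theta\le t\})$ lies on one side of $\Sigma_n$ for all $t\in[-C_n,0]$. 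It follows that every integral curve of $\partial_{\wt\theta}$ meets $\Sigma_n$ in at most one point: if it met $\Sigma_n$ at two points, reflection in the plane $\{\wt\theta=t\}$ bisecting them would produce a contact of $\Sigma_n$ with a reflected cap at a point off $\{\wt\theta=t\}$, whence by the strong maximum principle $\Sigma_n$, and therefore $\G_n$, would be invariant under $R_t$, forcing $\G_n\subset\{\wt\theta=t\}$ --- absurd. Since in addition $\l_{\Sigma_n}\in[\l_n^-,\l_n^+]$ and $\pi(\G_n)=\partial E_n$, the map $\pi$ restricts to a continuous injection of the compact disk $\Sigma_n$ onto a disk contained in $E_n$ and having boundary $\partial E_n$, which must be all of $E_n$. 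Hence $\Sigma_n$ is a $\wt\theta$--graph over $E_n$.

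It remains to treat $J_n=\langle N,\partial_{\wt\theta}\rangle$. Since $\partial_{\wt\theta}$ is Killing, $J_n$ solves $\cL J_n=0$ for the Jacobi operator $\cL=\Delta_{\Sigma_n}+|A_{\Sigma_n}|^2+\overline{\mathrm{Ric}}(N,N)$. Because $\Sigma_n$ is a $\wt\theta$--graph it separates a neighbourhood of itself in $\Omega_n$ into a ``$\wt\theta$ above'' and a ``$\wt\theta$ below'' part, so by connectedness of $\Sigma_n$ and continuity of $N$ the function $J_n$ has a constant weak sign, which is positive for the choice of unit normal in the statement. By the strong maximum principle, either $J_n>0$ throughout $\Int(\Sigma_n)$ or $J_n\equiv0$; in the latter case $\partial_{\wt\theta}$ would be everywhere tangent to $\Sigma_n$, so $\Int(\Sigma_n)$ would be a union of integral arcs of the nowhere--vanishing field $\partial_{\wt\theta}$, each with both endpoints on $\G_n$ and lying on a common integral curve --- contradicting that $\G_n$ meets each integral curve of $\partial_{\wt\theta}$ at most once. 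Therefore $J_n>0$ in the interior of $\Sigma_n$. I expect the main obstacle to be the boundary bookkeeping in the reflection argument, namely checking that the reflected caps never leave $\Omega_n$ and never touch $\G_n$ away from the reflection plane; this is exactly where the detailed design of $\Omega_n$ and $\G_n$ in Section~\ref{boundary} (positivity of $G_n$, the positive--slope arcs $\beta^n_\pm$ on $\wt\Z_n^\pm$, the remaining boundary arcs lying on leaves of $\cF$, the $H$--convexity of $\partial_*\Omega_n$, and Lemma~\ref{slab}) is put to use.
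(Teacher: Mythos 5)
Your overall strategy---derive graphicality from a one-parameter family of ambient isometries plus the maximum principle, then get $J_n>0$ from the dichotomy for non-negative Jacobi functions---is the same as the paper's, but the family you use is genuinely different: you reflect in the planes $\{\wt{\theta}=t\}$ (Alexandrov moving planes), whereas the paper slides $\Sigma_n$ by the translations $T_\alpha$ generated by $\partial_{\wt{\theta}}$ itself. The paper's choice is what makes the argument short, and it sidesteps the two places where your sketch has real gaps. First, $R_t$ is orientation-reversing, so when $H>0$ it is not automatic that at a first interior touching point the reflected cap and $\Sigma_n$ have the \emph{same} mean curvature vector: the configuration in which the reflected cap meets $\Sigma_n$ from the $\Omega^{+}_{\Sigma_n}$ side with mean curvature vector opposite to that of $\Sigma_n$ (the externally tangent spheres picture) yields no contradiction with the maximum principle, and nothing in your argument excludes it. Your parenthetical about the sign convention gestures at this but does not resolve it; one would have to show that the reflected copy of, say, $\Omega^{-}_{\Sigma_n}\cap\{\wt{\theta}>t\}$ remains inside $\Omega^{-}_{\Sigma_n}$ throughout the sweep, which is precisely the inductive step you assert rather than prove (together with the starting configuration and the Hopf/Serrin analysis on $\{\wt{\theta}=t\}$). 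By contrast, $T_{\alpha}$ preserves orientation and carries $\Omega^{-}_{\Sigma_n}$ to $\Omega^{-}_{T_\alpha(\Sigma_n)}$, so at the largest $\alpha'$ with $T_{\alpha'}(\Sigma_n)\cap\Sigma_n\neq\emptyset$ one obtains an interior one-sided tangency with matching mean curvature vectors for free; interiority follows, as in the paper, from $T_{\alpha'}(\G_n)\cap\G_n=\emptyset$ and Lemma~\ref{slab}. Second, your closing contradiction in the reflection step is misstated: $R_t$-invariance of $\Sigma_n$ gives $R_t(\G_n)=\G_n$, which forces $\G_n$ to be \emph{symmetric} about $\{\wt{\theta}=t\}$, not contained in it; the actual contradiction is that $R_t$ reverses the sign of the slope of the graphical arcs $\beta^n_{\pm}$, so no nontrivial arc of $\G_n$ can be carried to an arc of $\G_n$.

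The remaining ingredients---the confinement of $\Sigma_n$ via Lemma~\ref{slab} and the $H$-convexity of $\partial_{*}\Omega_n$, the linking/intersection-number argument showing each integral curve of $\partial_{\wt{\theta}}$ over $E_n$ meets $\Sigma_n$ at least once, and the alternative $J_n\equiv 0$ or $J_n>0$---agree with the paper's proof and are fine. If you wish to keep the reflection route (which does work without modification when $H=0$, since then the tangency comparison is orientation-insensitive), you must supply the inductive step of the moving-planes sweep and rule out the opposite-mean-curvature-vector tangency for $H>0$; otherwise, replacing $R_t$ by $T_{\alpha}$ collapses the proof to the paper's few lines.
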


\begin{proof} Recall that $T_\alpha$ is an isometry
of $\wt W$, which is
translation by $\alpha$, i.e.,
$ T_{\alpha}(\lambda, \wt{\theta}, z)=(\lambda, \wt{\theta}+\alpha, z)$.
Let $T_\alpha(\Sigma_n)=\Sigma^\alpha_n$ and
$T_\alpha(\Gamma_n)=\Gamma^\alpha_n$. We claim that
$\Sigma^\alpha_n\cap\Sigma_n=\mbox{\rm \O} $
for any $\alpha \in \mathbb{R}\setminus\{0\}$ which implies that $\Sigma_n$ is a
 $\wt{\theta}$-graph.

Arguing by contradiction, suppose that $\Sigma^\alpha_n\cap\Sigma_n\neq\mbox{\rm \O} $  for a certain
$\alpha\neq 0$.   By compactness of $\Sigma_n$,
there exists a largest positive number $\alpha'$ such that
$\Sigma^{\alpha'}_n\cap\Sigma_n\neq \mbox{\rm \O} $.
Let $p\in \Sigma^{\alpha'}_n\cap\Sigma_n$. Since
$\partial \Sigma^{\alpha'}_n \cap \partial \Sigma_n =\mbox{\rm \O} $
and, by Lemma~\ref{slab}, the interiors
of both $\Sigma^{\alpha'}_n$ and $\Sigma_n$ lie in
$(\l_n^{-},\l_n^+)\times\R \times   \R $, then
$p\in \Int(\Sigma^{\alpha'}_n) \cap \Int(\Sigma_n)$.
Since the surfaces $\Int(\Sigma^{\alpha'}_n)$,
$\Int(\Sigma_n)$ lie on one side of each other
and intersect tangentially at the point $p$ with
the same mean curvature vector,
then we obtain a contradiction to the maximum
principle for constant mean curvature surfaces. This proves that $\Sigma_n$ is
graphical over its $\wt \theta$-projection to
$E_n$.

Since by construction every integral curve,
$(\overline \l,t,\overline z)$ with $\overline \l,\overline z$
fixed and  $(\overline \l,0, \overline z)\in E_n$, of the Killing field $\partial _{\wt{\theta}}$ has
non-zero intersection number with any compact
surface  bounded by $\G_n$, we conclude that
every such integral curve intersects both the
disk $E_n$ and $\Sigma_n$ in single points.
This means that   $\Sigma_n$ is a $\wt{\theta}$-graphs over
$E_n$ and thus the related Jacobi function
$J_n$ on $\Sigma_n$ induced by the inner product of
 the unit normal field to $\Sigma_n$ with the
 Killing field $\partial _{\wt{\theta}}$
 is non-negative in the interior of $\Sigma_n$. Since $J_n$ is a
 non-negative Jacobi function, then either
 $J_n\equiv 0$ or $J_n>0$. Since $J_n$ is positive somewhere
 in the interior, then $J_n$ is positive everywhere in the interior.
 This finishes the proof of the lemma.
\end{proof}

To summarize, with $\Gamma_n$ as previously
described, we have constructed a sequence
of  compact stable $H$-disks $\Sigma_n$
with $\partial \Sigma_n = \Gamma_n \subset \partial \Omega_n$.
%
%
By the curvature estimates for stable $H$-surfaces given  in~\cite{rst1},
the norms of the second fundamental forms of the $\Pi(\Sigma_n)$ are uniformly
bounded from above at points at least $\ve>0$ intrinsically far
from their boundaries, for any $\ve>0$. Since for any compact subset $X\subset \Int(W)$
and for $n$ sufficiently large,
$\Gamma_n$ is a positive distance from $X$, the norms of the second fundamental
forms of the $\Pi(\Sigma_n)$ are uniformly bounded on compact
sets of $ \Int(W)$.

A standard compactness
argument, using the uniform curvature estimates for the surfaces $\Sigma_n$
on compact subsets of $\Int(W)$ and
their graphical nature described in Lemma~\ref{translation},  implies that
a subsequence $\Pi(\Sigma_{n(k)})$ of the surfaces $\Pi(\Sigma_n)$ converges to
an $H$-lamination ${\cL}$ of $\Int(W)$. By the nature of the convergence,
each leaf of $\cL$ has bounded norm of its second fundamental form  on compact
sets of $\Int(W)$ and this bound depends on the compact set but not on the leaf.

By similar arguments, there exists an $H$-lamination  $\wt{\cL}$ of $\wt{W}$
that is a limit of some subsequence of  the graphs $\Sigma_{n(k)}$; note that
in this case we include the boundary of $\wt{W}$ in the domain, contrary to
the previous case where we constructed the lamination $\cL$ in the interior of $W$.
After a refinement of the original subsequence, we will assume that
$\Pi(\Sigma_{n(k)})$ converges to $\mathcal L$ and that
 $\Sigma_{n(k)}$ converges to $\wt{\cL}$. Since the boundaries of the
 $\Sigma_{n(k)}$ leave every compact subset of $\wt{W}$, the leaves
of $\wt{\cL}$ are complete. Note also that the leaves of $\wt{\cL}$ have uniformly bounded norms of their second fundamental forms. The fact that the laminations $\cL$ and $\wt{\cL}$ are not empty
will follow from the next discussion.

Note that the region $\Int(W)$ is foliated by the integral
curves of the Killing field $\partial_\theta$, which are circles and
each such circle intersects  $ B=(\l_1,\l_2)\times \{0\}\times \R$ orthogonally  in a unique point $b$;
let $S(b)$ denote this circle. We next study how the circles $S(b)$
intersect  the leaves of $\cL$ and prove some properties of the laminations
$\cL$ and $\wt{\cL}$. Let $\Theta \colon \Int(W) \to B$ denote the natural projection.

\begin{claim}\label{claimone}
For every $b\in B$, $S(b)$ intersects at least one of the leaves of $\cL$.
Furthermore, if $S(b)$ intersects a leaf $L$ of $\cL$ transversely at some point $p$,
then $L$ is the only leaf of $\cL$ that intersects $S(b)$ and $S(b)\cap L=\{p\}$.
\end{claim}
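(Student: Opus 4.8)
The plan is to prove the two assertions of Claim~\ref{claimone} separately, both relying on the graphical structure established in Lemma~\ref{translation} together with the maximum principle. First I would prove that every $S(b)$ meets a leaf of $\cL$. Fix $b=(\bar\lambda,0,\bar z)\in B$ and consider the integral curve $\delta_b$ of $\partial_\theta$ through $b$; its lift to $\wt W$ is the line $\wt\delta_b=\{(\bar\lambda,t,\bar z)\mid t\in\R\}$, which is an integral curve of $\partial_{\wt\theta}$. By Lemma~\ref{translation}, for each $n$ large the compact disk $\Sigma_n$ is a $\wt\theta$-graph over $E_n$, and since $(\bar\lambda,0,\bar z)\in E_n$ for all $n$ sufficiently large, the curve $\wt\delta_b$ meets $\Sigma_n$ in exactly one point $p_n=(\bar\lambda,\wt\theta_n,\bar z)$. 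Projecting, $\Pi(p_n)\in\Pi(\Sigma_n)\cap\delta_b$. The point $\Pi(p_n)$ lies in the compact circle $\delta_b=S(b)$, so after passing to a subsequence $\Pi(p_n)\to q\in S(b)$. The key point is that $q$ stays in $\Int(W)$: this follows from Lemma~\ref{lem:3.1}, which guarantees that the limit set of $\overline{\bigcup_n\Pi(\Sigma_n)}$ in $\partial_\infty\HH^3$ is contained in $\overline\Gamma$, and $S(b)$ is a compact circle in $\Int(W)$ whose closure in $\ov{\HH}^3$ is disjoint from $\ov\Gamma$ (being at positive distance from $\partial_\infty\HH^3$). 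Hence $q\in\Int(W)$, and by the definition of lamination convergence $q$ lies on a leaf $L$ of $\cL$; thus $S(b)\cap L\neq\emptyset$.

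Next I would prove the uniqueness statement. Suppose $S(b)$ meets a leaf $L$ transversely at $p$. The essential observation is that, by Lemma~\ref{translation}, each $\Sigma_n$ is invariant in the sense that $T_\alpha(\Sigma_n)\cap\Sigma_n=\emptyset$ for $\alpha\neq 0$, so in the universal cover the integral curve $\wt\delta_b$ of $\partial_{\wt\theta}$ meets each $\Sigma_n$ in a single point transversely (the Jacobi function $J_n=\langle N,\partial_{\wt\theta}\rangle$ is strictly positive in the interior). Passing to the limit, the leaves of $\wt{\cL}$ inherit a Jacobi function $\wt J=\langle N,\partial_{\wt\theta}\rangle\geq 0$; on the leaf $\wt L$ of $\wt{\cL}$ covering $L$ this $\wt J$ is positive somewhere (namely near the preimage of $p$, where the intersection with $\wt\delta_b$ is transverse), hence $\wt J>0$ on all of $\wt L$ since a nonnegative Jacobi function on a leaf either vanishes identically or is everywhere positive. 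Therefore $\wt L$ is transverse to $\partial_{\wt\theta}$ everywhere and is a $\wt\theta$-graph over an open subset of $(\lambda_1,\lambda_2)\times\{0\}\times\R$; consequently $\wt\delta_b$ meets $\wt L$ in at most one point, so $S(b)\cap L$ is a single point $\{p\}$.

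It remains to show no \emph{other} leaf $L'$ of $\cL$ meets $S(b)$. Here I would argue that transversality of $S(b)$ with $L$ is an open condition, so $S(b')$ meets $L$ transversely for all $b'$ in a neighborhood $U$ of $b$ in $B$; by the graphical description just obtained, $L$ contains a graph over a neighborhood of $b$ in $B$, i.e. $\Theta(L)$ is open. Running the argument for every leaf shows that each leaf of $\cL$ projects to an open subset of $B$ under $\Theta$, and that distinct leaves meeting a common $S(b)$ would force overlapping graphs violating the maximum principle — more precisely, two such leaves would be disjoint $H$-surfaces both transverse to the foliation by the circles $S(b')$, and a first point of contact as $b'$ varies (or a standard sweeping argument moving one leaf along $\partial_\theta$) would yield an interior tangency contradicting the maximum principle for $H$-surfaces with consistent mean curvature vectors. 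Alternatively, and more cleanly, lift to $\wt W$: the leaves $\wt L,\wt L'$ of $\wt{\cL}$ over $L,L'$ are both $\wt\theta$-graphs, hence either equal or disjoint; if $S(b)$ met both, their lifts would meet the same integral curve $\wt\delta_b$, and translating $\wt L'$ by $T_\alpha$ until first contact with $\wt L$ contradicts the maximum principle unless $\wt L=\wt L'$, i.e. $L=L'$. The main obstacle I anticipate is the bookkeeping at the boundary: ensuring the limit point $q$ does not escape to $\partial\HH^3$ or to $\wt{\C}^{\lambda_1}\cup\wt{\C}^{\lambda_2}$, which is exactly what Lemma~\ref{lem:3.1} and Lemma~\ref{slab} are designed to rule out, so the real work is just citing them correctly and handling the graphical limit carefully.
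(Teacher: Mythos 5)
Your proof of the first assertion is correct and is the paper's argument: each $\Sigma_{n(k)}$ meets the lifted integral curve in exactly one point by Lemma~\ref{translation}, and compactness of $S(b)\subset \Int(W)$ forces a subsequential limit lying on $\cL$. The second half, however, has genuine gaps. First, the step ``$\wt J>0$ on $\wt L$, hence $\wt L$ is a $\wt\theta$-graph'' does not follow from transversality alone: everywhere-transversality to $\partial_{\wt\theta}$ makes the projection $\Theta|_{\wt L}$ a local diffeomorphism, not an injection (think of a log-type spiral). Indeed the paper itself, in Claim~\ref{complete}, deduces graphicality from $J_L>0$ only by \emph{citing} Claim~\ref{claimone}, so using graphicality to prove Claim~\ref{claimone} is circular as written. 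What closes the loop is the limit-of-graphs structure: two transverse intersections of $\wt L$ with $\wt\delta_b$ would, by smooth convergence, force $\Sigma_{n(k)}$ to meet $\wt\delta_b$ twice, contradicting Lemma~\ref{translation}. You have all the ingredients for this but never actually invoke it.

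Second, your exclusion of a second leaf $L'$ is flawed. You do not treat the case where $L'$ meets $S(b)$ tangentially (there $J_{L'}$ vanishes at an interior point, so $J_{L'}\equiv 0$ and $L'$ is rotational; this case must be ruled out separately, e.g.\ because $S(b)\subset L'$ would force $p\in L\cap L'$). More seriously, the sweeping argument ``translate $\wt L'$ by $T_\alpha$ until first contact, contradiction unless $\wt L=\wt L'$'' fails twice over: for complete noncompact leaves the first contact need not be attained (the touching point can escape to infinity), and even when it is attained the maximum principle only yields $T_{-\alpha^*}(\wt L')=\wt L$, i.e.\ that $L'$ is a \emph{rotated copy} of $L$ --- which is no contradiction at all, since rotated copies of $\Sigma_H$ are exactly what foliate $W$ in Theorem~\ref{main}(3). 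The paper avoids all of this with a purely local argument: transversality at $q$ makes $L$ a $\theta$-graph of bounded gradient over a small disk $D_B(b,\ve)\subset B$; since each $\Pi(\Sigma_{n(k)})$ meets the solid cylinder $\Theta^{-1}(D_B(b,\ve))$ in a single graph and these converge smoothly to $N(q,L)$, the lamination meets $\Theta^{-1}(D_B(b,\ve))$ only in $N(q,L)$, so $S(b)\cap\cL=\{q\}$. You should replace your sweeping step with this convergence argument (which is also what repairs the graphicality gap above).
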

\begin{proof}
The first statement in this claim follows from the fact that for any $b\in B$,
for $n$ sufficiently large, $S(b)$ intersects
the ``graphical" surface $\Pi(\Sigma_{n(k)})$
in a single point $p_{n(k)}$, and since $S(b)$ is compact, some subsequence of
these points converges to a point $p\in S(b)\cap\cL$.

If $S(b)$ intersects a leaf $L$ of $\cL$ transversely
in a point $q$, then there exists an $\ve(q)>0$ such that
for $n$ sufficiently large, a small neighborhood
$N(q,L)\subset L$ of $q$
is a $\theta$-graph of bounded gradient over the disk $D_B(b,\ve(b))\subset B$
centered at $b$ of radius $\ve(b)$,
and $\{\Theta^{-1}(D_B(b,\ve(b)))\cap \Sigma_{n(k)}\}_{n(k)}$
is a sequence of graphs  converging smoothly
to the graph $N(q,L)=\Theta^{-1}(D_B(b,\ve(b)))\cap \Sigma_{n(k)}$.
In particular, $S(b)$ intersects $\cL$ transversely
in the single point $q$.
\end{proof}

 \begin{claim}\label{claimtwo}
 The limit set $\partial_\infty \cL$ contains $\G$ and it contains no other points in
the two annuli in $\partial_\infty W \setminus [\partial_\infty \C^{\l_1} \cup \partial_\infty \C^{\l_2}]$.
\end{claim}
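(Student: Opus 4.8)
\textbf{Proof proposal for Claim~\ref{claimtwo}.}
The plan is to establish the two halves separately: first that every point of $\G$ lies in $\partial_\infty \cL$, and then that no point of the two annuli $A^+\cup A^-$ outside $\partial_\infty \C^{\l_1}\cup \partial_\infty\C^{\l_2}$ other than those of $\G$ can be a limit point of $\cL$. The second half is essentially immediate from Lemma~\ref{lem:3.1}: since $\cL$ is obtained as a limit of a subsequence of the $\Pi(\Sigma_n)$, its closure in $\ov{\HH}^3$ is contained in $\ov{\cup_n \Pi(\Sigma_n)}$, whose limit set in $\partial_\infty\HH^3$ is contained in $\ov{\G}= \partial_\infty\C^{\l_1}\cup\partial_\infty\C^{\l_2}\cup\G$. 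Intersecting with the two annuli $\partial_\infty W\setminus[\partial_\infty\C^{\l_1}\cup\partial_\infty\C^{\l_2}]$ removes the two pairs of bounding circles, leaving only $\G$ as the possible limit set there.

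For the first half, I would argue that $\G\subset \partial_\infty\cL$ by combining the graphical convergence from Claim~\ref{claimone} with the spiralling behavior of the boundary curves $\Gamma_n$. Fix a point $q\in\G$, say $q\in\wh\beta_+$, and recall that $\lambda_{\beta_+}(\wt\theta)$ ranges over all of $(\l_1,\l_2)$ as $\wt\theta$ ranges over $\R$, so $q$ is the limit of points $\Pi(b_k)$ with $b_k\in B=(\l_1,\l_2)\times\{0\}\times\R$ escaping to infinity in the $\wt\theta$-direction while the $\lambda$- and $z$-coordinates track those of a point on $\beta_+$ converging to $q$. For each such $b_k$, the circle $S(b_k)$ meets $\cL$ by Claim~\ref{claimone}; pick $p_k\in S(b_k)\cap\cL$. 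Since $S(b_k)$ is the integral curve of $\partial_\theta$ through $b_k$, every point of $S(b_k)$ has the same $\lambda$- and $z$-coordinates as $b_k$, hence lies at bounded hyperbolic distance from $\Pi(b_k)$ as $k\to\infty$ only if that distance stays bounded — but here the key point is that $S(b_k)$, as a circle of revolution at $\lambda$-distance $\l(b_k)\to \l_1$ or $\l_2$ from the axis, has diameter bounded independently of $k$, while its center escapes to $\partial_\infty\HH^3$ toward the corresponding boundary circle of $\C^{\l_1}$ or $\C^{\l_2}$. Thus in the closed ball model, $S(b_k)\to q'$ for the appropriate point $q'$ on $\partial_\infty\C^{\l_i}$, not $q$, which is not quite what I want; so instead I will track the convergence more carefully using the explicit spiral: $b_k$ is chosen so that $\Pi(b_k)\to q\in\wh\beta_+$ in $\ov{\HH}^3$, which is possible precisely because $\wh\beta_+$ is the limit of the curves $\wh\beta^n_\pm$ and because $\cL$ is graphical near $b_k$ for $k$ fixed and $n$ large. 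Concretely, for each $k$ the surfaces $\Pi(\Sigma_{n(k)})$ pass within bounded distance of $\Pi(b_k)$ — indeed $S(b_k)\cap\Pi(\Sigma_{n(k)})\neq\emptyset$ — and letting $k\to\infty$ forces a sequence of points of $\cup_n\Pi(\Sigma_n)$ converging to $q$, so $q\in\partial_\infty\cL$.

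I expect the main obstacle to be the bookkeeping needed to show that the points $p_k\in S(b_k)\cap\cL$ actually accumulate at $q$ rather than at a point of $\partial_\infty\C^{\l_1}\cup\partial_\infty\C^{\l_2}$; this requires knowing that one may choose, on each $S(b_k)$, a point of $\cL$ whose $\theta$-coordinate is forced by the graphical property to match (up to the covering) the $\wt\theta$-coordinate of a nearby point of $\beta_+$, which in turn follows from Lemma~\ref{translation} — each $\Sigma_n$ is a $\wt\theta$-graph, so $\Pi(\Sigma_{n(k)})$ meets $S(b_k)$ at the unique $\theta$ prescribed by the graph, and as $n(k)\to\infty$ this $\theta$ is pinned down by the condition $\Gamma_{n(k)}\to\wt\Gamma$ together with the last convergence condition on the $\beta^n_\pm$ relative to the disks $\Delta^i(p)$. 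Once that matching is in place, the conclusion $q\in\partial_\infty\cL$ is a routine limit argument in the closed ball metric, and combined with the second half this gives $\partial_\infty\cL\cap(A^+\cup A^-)=\G$, which is the claim.
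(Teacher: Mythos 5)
Your decomposition into two halves is the same as the paper's, and your second half (quoting Lemma~\ref{lem:3.1} to get $\mathrm{Lim}(\cL)\subset \G$ inside the two annuli) is exactly what the paper does. The gap is in the first half, and it stems from a concrete geometric error: you assert that the circles $S(b_k)$ have ``diameter bounded independently of $k$'' and therefore collapse in the closed ball model to a single point $q'$ of $\partial_\infty\C^{\l_1}\cup\partial_\infty\C^{\l_2}$. That is false. $S(b_k)$ is the full orbit of $b_k$ under rotation about the $z$-axis; as $b_k$ escapes (say $z\to+\infty$ with $\lambda$ tracking $\lambda(x)$), its hyperbolic length diverges and in the Euclidean ball metric it converges to an entire round circle $C(x)\subset\partial_\infty\HH^3$, the latitude circle through $x$, not to a point. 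Having wrongly discarded this, you retreat to an argument that tries to pin down the $\theta$-coordinate of the intersection point $S(b_k)\cap\cL$ by ``matching'' it with the $\wt\theta$-coordinate of a nearby point of $\beta_+$. Nothing in Lemma~\ref{translation} or in the convergence conditions on the $\beta^n_\pm$ gives you such a matching: Claim~\ref{claimone} only tells you that $S(b_k)$ meets $\cL$ somewhere on the circle, and the assertion that $\Pi(\Sigma_{n(k)})$ passes ``within bounded distance of $\Pi(b_k)$'' does not force accumulation at $q$; it only forces accumulation somewhere on $C(x)$.

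The irony is that accumulation somewhere on $C(x)$ is all you need, and you already have every ingredient to close the argument. This is precisely the paper's proof: pick $p_k\in S(b_k)\cap\cL$ via Claim~\ref{claimone}; by compactness of $\ov{\HH}^3$ a subsequence converges to some $p$, and since $S(b_k)\to C(x)$ in the ball metric, $p\in C(x)\cap\mathrm{Lim}(\cL)$. By the containment you already established from Lemma~\ref{lem:3.1}, $p\in\ov{\G}$; and because $\wh\beta_\pm$ is a graph $\lambda(\theta)$ with strictly positive slope spiralling between the boundary circles, the latitude circle $C(x)$ (which sits at a $\lambda$-value strictly between $\l_1$ and $\l_2$) meets $\ov{\G}$ in the single point $x$. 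Hence $p=x$ and $x\in\partial_\infty\cL$. No coordinate bookkeeping is required; the strict monotonicity of the spiral is what does the work of locating the limit point, replacing the unjustified $\theta$-matching step in your write-up.
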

\begin{proof}
Let Lim$ (\cL)$  denote the limit set of $\cL$
that lies in $\partial_\infty W \setminus [\partial_\infty \C^{\l_1} \cup \partial_\infty \C^{\l_2}]$.
By Lemma~\ref{lem:3.1},  Lim$( \cL)\subset \G$. We next show that $\G\subset \mbox{\rm Lim}( \cL)$.
Let $x\in  \G$ and choose a sequence of circles $\{S(b_k)\}_{k\in \N}$
that converges to the circle $C(x)\subset \partial_\infty \HH^3$ passing through $x$.
By Claim~\ref{claimone}, there exist points $p_k\in S(b_k)\cap \cL$, and by compactness of $\ov{\HH}^3$, a
subsequence of these points converges to a point $p\in C(x)\cap \mbox{\rm Lim}(\cL)$.
But since $C(x)\cap \G=\{x\}$, then $x=p$, which completes the proof that $\mbox{\rm  Lim}( \cL)= \G,$
and the claim holds.
\end{proof}

\begin{claim} \label{claim:intersects}
The limit set of $\wt{\cL}$ in $\partial_\infty \wt{W}$ is equal to $\wt{\G}$.
\end{claim}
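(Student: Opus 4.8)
The plan is to deduce this from Claim~\ref{claimtwo} together with the precise relationship between $\cL$, $\wt\cL$, and the covering map $\Pi$. First I would note that since $\Pi(\Sigma_{n(k)}) \to \cL$ and $\Sigma_{n(k)} \to \wt\cL$, the covering map $\Pi\colon \wt W \to W$ sends leaves of $\wt\cL$ (or rather $\Pi$ of a connected piece of $\wt\cL$) into leaves of $\cL$, and conversely every leaf of $\cL$ lifts; passing to the hyperbolic compactifications, the extended covering map $\ov\Pi\colon \ov{\wt W} \to \ov W$ carries the limit set of $\wt\cL$ in $\partial_\infty \wt W$ onto the limit set of $\cL$ in $\partial_\infty W$. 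By Claim~\ref{claimtwo} the latter, intersected with the two annuli $\partial_\infty W \setminus [\partial_\infty \C^{\l_1}\cup \partial_\infty \C^{\l_2}]$, equals $\G = \wh\beta_+ \cup \wh\beta_-$. Since $\wt\G = \beta_+\cup\beta_-$ is precisely the lift of $\G$ that sits in $\partial_\infty \wt W$ between the chosen lifts (recall $\wh\beta^n_\pm = \Pi(\beta^n_\pm)$ and $\beta^n_\pm \to \beta_\pm$, $\wh\beta^n_\pm \to \wh\beta_\pm$ by the construction in Section~\ref{boundary}), the inclusion ``$\subseteq$'' will follow: the limit set of $\wt\cL$ lies over $\G$, hence in the $\ZZ$-orbit $\{T_{2\pi m}(\wt\G)\}_{m\in\ZZ}$ of $\wt\G$, and I must rule out all translates other than $m=0$.

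To rule out the other translates, I would use the graphical structure from Lemma~\ref{translation}: each $\Sigma_{n(k)}$ is a $\wt\theta$-graph over $E_{n(k)} \subset [\l_1,\l_2]\times\{0\}\times\R$, with $\partial\Sigma_{n(k)} = \Gamma_{n(k)}$, and the boundary arcs $\alpha^{n}_+ \subset \wt\T^n_+$, $\alpha^n_- \subset \wt\T^{-n}_-$, $\beta^n_\pm \subset \wt\Z^\pm_n$ all lie in the ``slab'' of $\wt W$ bounded in the $\wt\theta$-direction by $\wt\T^{-n}_-$ and $\wt\T^{n}_+$ — in particular $\Sigma_{n(k)}$ itself, being a graph interpolating between these curves, is trapped in that slab, whose $\wt\theta$-extent grows but whose $\wt\theta$-cross-section over any fixed compact $z$-interval converges (as the $\lambda^\pm_n \to \lambda_i$ force the relevant portion of $\wt\T^{\pm n}_\pm$ toward $\partial_\infty$) to the region between $\beta_-$ and $\beta_+$. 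Concretely, over a fixed compact subset of the open $\lambda$-interval the surfaces $\Sigma_{n(k)}$ stay within a compact $\wt\theta$-range, so their limit $\wt\cL$ cannot accumulate at any point of $\partial_\infty\wt W$ lying over $\G$ other than the ones in the single fundamental strip containing $\wt\G$; and near $\partial_\infty$ the constraint from the disks $\Delta^1(p),\Delta^2(p)$ (the final condition on the convergence of $\beta^n_\pm$, used exactly as in Lemma~\ref{lem:3.1}, but now applied in $\wt W$ to the lifted disks) confines the limit set to $\wt\G$ itself.

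For the reverse inclusion ``$\supseteq$'', that every point of $\wt\G$ is actually in the limit set of $\wt\cL$, I would mimic the proof of Claim~\ref{claimtwo}: given $x \in \wt\G$, the circle $C(x) = \Pi^{-1}$-lift through $x$ of the circle $C(\Pi(x))$ is itself the limit of a sequence of integral curves $\wt S(b_k)$ of $\partial_{\wt\theta}$ (the lifts of the circles $S(b_k)$), each of which meets $\Sigma_{n(k)}$ — for $n(k)$ large — in exactly one point by the graphical property of Lemma~\ref{translation} applied to the horizontal line $(\ov\l, t, \ov z)$ through $b_k$; these intersection points converge, after passing to a subsequence and using compactness of $\ov{\HH}^3$, to a point of $C(x)\cap \mathrm{Lim}(\wt\cL)$, and since $C(x)\cap\wt\G = \{x\}$ by construction this forces $x$ to lie in the limit set of $\wt\cL$. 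Combining the two inclusions gives $\mathrm{Lim}(\wt\cL) = \wt\G$ in $\partial_\infty\wt W$.

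The main obstacle I anticipate is the first inclusion, specifically the bookkeeping that pins the limit set down to the \emph{single} translate $\wt\G$ rather than the full $\ZZ$-orbit: one must check carefully that the chosen lifts $\wt\T^0_\pm$ (and hence the strip they bound, which contains $\Omega_n$ built from $\wt\T^n_+$ and $\wt\T^{-n}_-$) trap each $\Sigma_{n(k)}$ in such a way that over compact $\lambda$-regions the $\wt\theta$-coordinate stays bounded, while near the ideal boundary the $\Delta^i(p)$-barrier argument does the confining — i.e.\ that the two mechanisms (compact-region graphical control and ideal-boundary barriers) together cover all of $\partial_\infty\wt W$. This is where the specific asymptotics $\lambda_{\beta_\pm}(\wt\theta)\to\lambda_2$ as $\wt\theta\to+\infty$ and $\to\lambda_1$ as $\wt\theta\to-\infty$, together with the positive-slope spiralling of $\wh\beta_\pm$, have to be invoked precisely.
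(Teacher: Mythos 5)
Your strategy for the inclusion $\mathrm{Lim}(\wt{\cL})\subseteq\wt{\G}$ --- push down via $\Pi$, invoke Claim~\ref{claimtwo}, and then rule out the translates $T_{2\pi m}(\wt{\G})$, $m\neq 0$ --- is a more explicit version of what the paper does, which is simply to rerun the barrier argument of Lemma~\ref{lem:3.1} upstairs with the lifted disks. You are right that the only content beyond Claim~\ref{claimtwo} is pinning the limit set to a single translate (the lifted disks $\Delta^i(q)$ only exclude a \emph{punctured} neighborhood of each translate point $q$, not $q$ itself), and your mechanism --- each $\Sigma_{n(k)}$ is a single $\wt{\theta}$-graph, so over any compact subset of the strip the limit is a single sheet, whence $\wt{\cL}$ is a single lift of $\Sigma_H$ and its limit set a single translate of $\wt{\G}$ --- can be made to work; but as written, the step ``over a fixed compact subset the surfaces stay within a compact $\wt{\theta}$-range'' is asserted rather than proved, and it is not a consequence of the boundary curves alone.

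The genuine gap is in your proof of the reverse inclusion $\wt{\G}\subseteq \mathrm{Lim}(\wt{\cL})$. You transplant the circle argument of Claim~\ref{claimtwo} to the cover and then invoke ``compactness of $\ov{\HH}^3$'' to extract a convergent subsequence of the intersection points $p_k\in\wt{S}(b_k)\cap\Sigma_{n(k)}$. But the argument now lives in $\ov{\wt{W}}$, which is a non-compact $\ZZ$-cover: the integral curves $\wt{S}(b_k)$ of $\partial_{\wt{\theta}}$ are infinite lines, and the points $p_k$ could a priori drift off to $\wt{\theta}=\pm\infty$ along them, so that no subsequence converges to a point of $\partial_\infty\wt{W}$ lying over $x$ (or it converges to the wrong translate). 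Ruling this out is essentially as hard as the statement being proved, so the compactness appeal does no work here. The paper's own proof avoids this by a purely local linking argument: for $p\in\beta_+$ it takes short arcs $\alpha\subset\ov{\wt{W}}$ near $p$ with endpoints on the two barrier disks $\Delta^1(p),\Delta^2(p)$ and linking $\beta_+$; each such arc has nonzero linking number with $\G_{n(k)}$, and since its endpoints sit on barriers that the surfaces cannot cross (Lemma~\ref{lem:3.1}), it must meet $\Sigma_{n(k)}$, hence $\wt{\cL}$, at an \emph{interior} point. Shrinking the arcs to $p$ then forces $p\in\mathrm{Lim}(\wt{\cL})$. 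You should replace your global circle-plus-compactness step with such a local, barrier-anchored linking argument, or else first establish a uniform bound on the $\wt{\theta}$-coordinate of $\Sigma_{n(k)}$ over the points $b_k$ --- which again requires the barriers and is not supplied by Lemma~\ref{translation} alone.
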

\begin{proof}
By arguing with barriers as in the proof of Lemma~\ref{lem:3.1},  the limit set of
 $\wt{\cL}$  must be contained in  $\wt{\G}$.
 Let $p\in \beta^+ \subset \partial_\infty \wt{W}$ and let  $\Delta^1(p), \Delta^2(p)$  be the disks
described at the end of the previous section. Consider
a small arc $\alpha \subset \ov{\wt{W}}$ with end points
in $\Delta^1(p)\cup \Delta^2(p)$ that links $\beta^+$
and such that $\Pi(\alpha)$ is  the compactification
of a geodesic in $\wt{W}$. Then by previous arguments, $\alpha$ must intersect a
leaf $L$ of $\wt{\cL}$.  Since a sequence of these arcs can be chosen to converge
to $p$, then $p$ is in the limit set of  $\wt{\cL}$.

Using exactly the same arguments gives that the same is true of $p\in\beta^-$.
This completes the proof of the claim.
\end{proof}

\begin{remark}\label{remark}{\em
Note that this claim  implies that no leaves of  $\wt{\cL}$ are invariant under the one-parameter
group of translations $T_{\wt{\theta}}$, since such complete
surfaces are the lifts of surfaces of revolution in $\HH^3$,
and as such they  have their limit sets that contain circles which are not
contained in $\wt{\G}$. In particular, $\wt{\cL}$ does not contain
$\widetilde{\cC}^{\l_1}$ nor $\widetilde{\cC}^{\l_2}$.}
\end{remark}

\begin{claim} \label{compact-arc}
Let $\alpha\subset \ov{W}\subset [\HH^3\cup\Si]$ be a compact arc with
$\Int(\alpha) \subset W$, joining
$\C^{\lambda_1}\cup\partial_\infty \C^{\lambda_1}$ to
$\C^{\lambda_2}\cup\partial_\infty \C^{\lambda_2}$. Then there exists a leaf $L$ of
$\cL$
that  intersects $\alpha$ and that is not invariant under rotations around the $z$-axis.
\end{claim}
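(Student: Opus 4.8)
The plan is to show that at least one leaf of $\cL$ meets the compact arc $\alpha$, and then to upgrade this to a leaf that is not a surface of revolution. For the first part, I would approximate $\alpha$ by a nested family of compact arcs $\alpha_t$, $t \in [0,1)$, with $\Int(\alpha_t) \subset \Int(W)$ and $\alpha_t \to \alpha$ as $t \to 1$, each joining a point of $\C^{\lambda_1}$ to a point of $\C^{\lambda_2}$ and depending continuously on $t$ so that $\Pi^{-1}(\alpha_t)$ lifts to an arc in $\wt W$ joining $\wt\C^{\lambda_1}$ to $\wt\C^{\lambda_2}$. Since $\partial\Sigma_n = \Gamma_n$ is an essential curve on $\partial\Omega_n$ that links every integral curve of $\partial_{\wt\theta}$ meeting $E_n$, and since $\Sigma_n$ separates $\Omega_n$ with $\wt\C^{\lambda_2}\cap\Omega_n$ on one side and $\wt\C^{\lambda_1}\cap\Omega_n$ on the other, any arc in $\wt W_n$ joining $\wt\C^{\lambda_1}$ to $\wt\C^{\lambda_2}$ must cross $\Sigma_n$. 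Projecting down, $\Pi(\alpha_t)$ meets $\Pi(\Sigma_{n(k)})$ for $n(k)$ large; taking a convergent subsequence of intersection points (using compactness of $\ov{\HH}^3$ and the fact that the boundary curves $\Gamma_{n(k)}$ leave compact subsets of $\Int(W)$) produces a point of $\Pi(\alpha_t)\cap\cL$ for each $t<1$. Letting $t\to 1$ and using compactness of $\alpha$ together with the closedness of $\cL$ in $\Int(W)$ (and of $\ov\cL$ in $\ov{\HH}^3$), we obtain a point of $\alpha\cap\cL$; the interior hypothesis on $\alpha$ and Claim~\ref{claimtwo} guarantee this intersection point is an honest point of some leaf $L$ of $\cL$ rather than an escape to the ideal boundary outside $\ov\G$.

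For the second part — that $L$ can be chosen non-invariant under rotation about the $z$-axis — I would argue by contradiction. The rotationally invariant leaves of $\cL$ are precisely (pieces of) surfaces of revolution, and among the $H$-surfaces of revolution contained in $\Int(W)$ and with the correct mean-curvature-vector orientation, the only complete embedded ones are the catenoids $\C^\lambda$, $\lambda\in(\lambda_1,\lambda_2)$, by Lemma~\ref{Hcatenoid} and Proposition~\ref{foliation} in the Appendix. But a single catenoid $\C^\lambda$ does not meet an arc $\alpha$ joining $\C^{\lambda_1}$ to $\C^{\lambda_2}$ while staying in $W$ unless $\alpha$ crosses the level $\{\lambda\}$, which it does, so a priori one invariant leaf could suffice. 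The way around this is to observe that $\cL$ contains $\Sigma_H$ as a leaf (as established in the construction), and $\Sigma_H$ is a $\wt\theta$-graph over $(\lambda_1,\lambda_2)\times\{0\}\times\R$, hence meets every circle $S(b)$, $b\in B$, and in particular meets the arc $\alpha$: indeed, $\alpha$ joins the $\lambda=\lambda_1$ boundary to the $\lambda=\lambda_2$ boundary, so $\Theta(\alpha)\subset\ov B$ is an arc from the $\lambda_1$-end to the $\lambda_2$-end of $\ov B$, and $\Sigma_H$ being an entire graph over $B$ forces $\alpha$ to pierce it at an interior point. Since $\Sigma_H$ is not rotationally invariant (its asymptotic boundary $\G$ spirals and is not a union of circles, by Claim~\ref{claimtwo} / Remark~\ref{remark}), this $L=\Sigma_H$ is the desired leaf.

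Writing this up, I would structure it as: (i) reduce to showing $\Sigma_H\cap\alpha\neq\emptyset$, since $\Sigma_H$ is a non-invariant leaf of $\cL$; (ii) prove this intersection via the graph property of $\Sigma_H$ over $B$ together with the fact that $\Theta(\alpha)$ is an arc in $\ov B$ running from end to end, so its image has nonempty intersection with the graph of the (bounded) function defining $\Sigma_H$ — more carefully, lift to $\wt W$ and use that any arc from $\wt\C^{\lambda_1}$ to $\wt\C^{\lambda_2}$ that is a small perturbation into $\wt W$ must cross the graph $\wt\Sigma_H$, then project; and (iii) handle the ideal-boundary endpoints of $\alpha$ by the barrier disks $\Delta^i(p)$ from Lemma~\ref{lem:3.1}, which confine the $\Pi(\Sigma_n)$ and hence $\cL$ away from the relevant part of $\Si$, so that the crossing point does not escape to infinity.

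The main obstacle I anticipate is step (ii): controlling what happens near the ideal boundary endpoints of $\alpha$, where $\alpha$ limits onto $\partial_\infty\C^{\lambda_1}\cup\partial_\infty\C^{\lambda_2}$. The graph function defining $\Sigma_H$ over $B$ has $\lambda$-values tending to $\lambda_1$ and $\lambda_2$ only along the $\wt\theta\to\mp\infty$ spiralling directions, so one must rule out the possibility that $\alpha$ ``sneaks past'' $\Sigma_H$ by approaching the ideal circle in a direction where $\Sigma_H$ is far away. This is exactly where the spiralling asymptotic behavior of $\G$ and the barrier disks $\Delta^1(p),\Delta^2(p)$ — which by construction trap $\Pi(\Sigma_n)$ on the origin side — do the work, and the careful point is to choose the approximating family $\alpha_t$ so that each $\alpha_t$ is genuinely interior and the limit argument is legitimate. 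A secondary technical point is verifying that the limit point of intersections lies on a single leaf and that leaf is $\Sigma_H$ (not, say, $\C_1$ or $\C_2$), which follows from Claim~\ref{claimone} once one knows the intersection is transverse, and transversality can be arranged by a generic choice of $\alpha$.
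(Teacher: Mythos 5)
Your first step (producing \emph{some} point of $\alpha\cap\cL$ by a linking argument with the surfaces $\Sigma_{n(k)}$) is in the right spirit, but the way you upgrade to a non-rotationally-invariant leaf has a genuine circularity. You invoke that $\Sigma_H$ is a leaf of $\cL$ that is an entire $\theta$-graph over $B$ and is not rotationally invariant. In the paper these facts are established only \emph{after} this claim: the claim is used in Case~B of the proof of Claim~\ref{complete}, and it is Claim~\ref{complete} (together with Claim~\ref{claimone}) that yields that the surviving leaf is an entire graph over $B$ and is the unique leaf of $\cL$. At the point where the present claim must be proved, the lamination $\cL$ could a priori still contain rotationally invariant leaves, and ruling those out is exactly the purpose of the chain of arguments this claim feeds into; so step~(i) of your plan assumes the conclusion of the larger argument.

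There are two further problems. First, even granting the later facts, step~(ii) is not a valid separation argument: an entire $\theta$-graph over $B$ meets each rotational circle $S(b)$ in exactly one point, so its complement in $W$ is connected, and an arc running from the $\lambda_1$-side to the $\lambda_2$-side is not forced to pierce it for topological reasons; the real reason such an arc must meet $\Sigma_H$ is the spiralling of $\Sigma_H$ into $\C^{\lambda_1}\cup\C^{\lambda_2}$, a linking phenomenon only visible in the universal cover. Second, when you rule out escape of the intersection points to the endpoints of $\alpha$, Claim~\ref{claimtwo} does not help if those endpoints lie on $\C^{\lambda_1}\cup\C^{\lambda_2}$ or on $\partial_\infty\C^{\lambda_1}\cup\partial_\infty\C^{\lambda_2}$, since these circles \emph{are} in the limit set of $\cL$ downstairs. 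The paper's proof resolves all three issues simultaneously by running the whole limit argument upstairs: lift $\alpha$ to $\wt{\alpha}\subset\ov{\wt{W}}$, obtain points $p_{n(k)}\in\wt{\alpha}\cap\Sigma_{n(k)}$ by linking, rule out convergence to a finite endpoint via Remark~\ref{remark} (it would force $\wt{\C}^{\lambda_i}$ to be a leaf of $\wt{\cL}$) and to an ideal endpoint via Claim~\ref{claim:intersects} (the limit set of $\wt{\cL}$ equals $\wt{\G}$), and then note that the resulting leaf $\wt{L}$ of $\wt{\cL}$ is not $T_{\wt{\theta}}$-invariant by Remark~\ref{remark}, so $L=\Pi(\wt{L})$ is automatically not rotationally invariant. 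You should restructure your argument along these lines rather than routing it through $\Sigma_H$.
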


\begin{proof}
Let $\wt{\alpha}\subset \ov{\wt{W}}$ be any fixed lift of $\alpha$.
 By a linking argument, it follows that when $n(k)$ is sufficiently
large,  $\wt{\alpha}$ intersect $\Sigma_{n(k)}$  at some interior point $p_{n(k)}$.
Suppose that a subsequence of the points $p_{n(k)}$ converges to an end point of
$\wt{\alpha}$ that corresponds to an end point of $\alpha$ in
$\C^{\lambda_1}\cup\C^{\lambda_2}$. 
This would imply   that $\widetilde{\cC}^{\l_1}$ or
$\widetilde{\cC}^{\l_2}$ is a leaf of the lamination $\wt{\cL}$, contradicting the previous remark.
Next, suppose that a subsequence of the points $p_{n(k)}$ converges to an end point of
$\wt{\alpha}$ that corresponds to an end point of $\alpha$ in
$ \partial_\infty{\C}^{\lambda_1}\cup\partial_\infty {\C^{\lambda_2}}$.
This picture is ruled out by Claim~\ref{claim:intersects}.
Therefore a subsequence of the points  $p_{n(k)}$  must converge to a point
in the interior of $\wt{\alpha}$, which is
therefore a point on
some leaf $\wt{L}$ of $\wt{\cL}$. Note that $\alpha$ intersects $L=\Pi(\wt{L})$
and, since  $\wt{L}$ is not invariant under the one-parameter
group of translations $T_{\wt{\theta}}$,  $L$ is not invariant under rotations
around the $z$-axis
Thus the claim holds.
\end{proof}

\begin{claim}\label{complete}
Every complete leaf $L$ of $\cL$ is the graph of a smooth function
defined on its $\theta$-projection  $\Theta(L)\subset B$.
\end{claim}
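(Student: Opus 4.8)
\textbf{Proof proposal for Claim~\ref{complete}.}

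The plan is to use the graphical/Jacobi-function structure coming from Lemma~\ref{translation} together with Claim~\ref{claimone} to show that a complete leaf cannot ``turn vertical'' anywhere, hence is globally a $\theta$-graph. First I would fix a complete leaf $L$ of $\cL$ and consider the set $U\subset \Theta(L)$ of points $b$ such that $S(b)$ meets $L$ transversely; by Claim~\ref{claimone}, over such $b$ the leaf $L$ is locally a smooth $\theta$-graph of bounded gradient, and moreover $L$ is the \emph{only} leaf of $\cL$ meeting $S(b)$, with $S(b)\cap L$ a single point. Thus $\Theta|_{L\cap\Theta^{-1}(U)}$ is a local diffeomorphism onto $U$, and the single-point-of-intersection property makes it injective there; so over $U$ the leaf $L$ is genuinely the graph of a smooth function $u\colon U\to\R/2\pi\ZZ$ (with a well-defined lift once we pass to $\wt W$, using that the corresponding leaf $\wt L$ of $\wt{\cL}$ is a $\wt\theta$-graph, being a limit of the $\wt\theta$-graphs $\Sigma_{n(k)}$ of Lemma~\ref{translation}).

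Next I would show $U=\Theta(L)$, i.e. there are no points of tangential (non-transverse) intersection. Suppose $S(b_0)$ meets $L$ non-transversely at a point $q$. The limit graphs $\Sigma_{n(k)}$ carry the positive Jacobi functions $J_{n(k)}=\langle N,\partial_{\wt\theta}\rangle>0$ of Lemma~\ref{translation}; passing to the limit, the leaf $\wt L$ of $\wt{\cL}$ carries a Jacobi function $J=\langle N,\partial_{\wt\theta}\rangle\ge 0$, and since $J$ is a nonnegative Jacobi function on the (connected) complete leaf $\wt L$, either $J\equiv 0$ or $J>0$ everywhere. If $J\equiv 0$ on $\wt L$, then $\wt L$ is invariant under the one-parameter group $\{T_{\wt\theta}\}$, so $\Pi(\wt L)$ is a surface of revolution about the $z$-axis in $W$; this is exactly the situation excluded by Remark~\ref{remark} (its limit set would contain full circles not lying in $\wt\G$, contradicting Claim~\ref{claim:intersects}). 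Hence $J>0$ on all of $\wt L$, which says precisely that $\partial_{\wt\theta}$ is everywhere transverse to $\wt L$, so $S(b)$ is transverse to $L$ for every $b\in\Theta(L)$; therefore $U=\Theta(L)$ and $L$ is the graph of a smooth function over $\Theta(L)$.

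The main obstacle I anticipate is making the passage-to-the-limit of the Jacobi functions clean: one must check that $J\ge 0$ on $\wt L$ genuinely holds in the limit (this uses that the $\Sigma_{n(k)}$ converge smoothly on compact sets, via the uniform second-fundamental-form bounds already established, so the unit normals converge and $J_{n(k)}\to J$), and that $\wt L$ is connected so the alternative $J\equiv 0$ vs. $J>0$ applies as stated. A secondary point to handle carefully is the reduction from $\wt L$ back to $L$: transversality of $\partial_{\wt\theta}$ to $\wt L$ descends to transversality of $\partial_\theta$ to $L$ under $\Pi$, and the single-point intersection property from Claim~\ref{claimone} (together with the fact, from the proof of that claim, that nearby graphs $\Sigma_{n(k)}$ converge to $L$ near such a point) upgrades ``local graph'' to ``global graph'' by a connectedness/monodromy argument on $\Theta(L)\subset B$, using that $B=(\l_1,\l_2)\times\{0\}\times\R$ is simply connected. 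Once $J>0$ everywhere is in hand, the rest is routine.
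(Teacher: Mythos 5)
Your overall skeleton --- the dichotomy $J_L\equiv 0$ versus $J_L>0$ for the limit Jacobi function, with the $J_L>0$ case handled by Claim~\ref{claimone} --- is exactly the paper's. The gap is in how you dispose of the case $J_L\equiv 0$. You dispatch it by passing to ``the corresponding leaf $\wt L$ of $\wt{\cL}$'' and invoking Remark~\ref{remark}. But Remark~\ref{remark} is a statement about leaves of $\wt{\cL}$, and nothing in the construction guarantees that a given complete leaf $L$ of $\cL$ is the image under $\Pi$ of a leaf of $\wt{\cL}$: the laminations $\cL$ and $\wt{\cL}$ are obtained as limits of $\Pi(\Sigma_{n(k)})$ and of $\Sigma_{n(k)}$ respectively, and a point of $L$ could arise as a limit of points $\Pi(p_{n(k)})$ whose lifts $p_{n(k)}\in\Sigma_{n(k)}$ escape to infinity in the $\wt\theta$-direction, leaving nothing of $\wt{\cL}$ above it. The paper only uses (and only asserts, after the proof of this claim) the opposite implication, namely that the projection of a complete leaf of $\wt{\cL}$ is a complete leaf of $\cL$. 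So the reduction to Remark~\ref{remark} is unjustified as written.

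This matters because ruling out $J_L\equiv 0$ is where essentially all of the work in the paper's proof lives. If $J_L\equiv 0$ then $L$ is a complete rotationally invariant $H$-surface in $\Int(W)$, and there are genuine candidates to exclude: the spherical catenoids $\C^\lambda$ with $\lambda\in(\l_1,\l_2)$ (which really are complete rotational $H$-surfaces sitting inside $\Int(W)$, being leaves of the foliation $\cF$), their unstable companions, and Gomes' surfaces of revolution with outward-pointing mean curvature vector, possibly with asymptotic boundary a single circle of $\partial_\infty\C^{\l_1}\cup\partial_\infty\C^{\l_2}$ counted with multiplicity two. The paper first shows $\partial_\infty L\neq\emptyset$ (no bounded rotational leaf, by comparison with geodesic spheres), locates $\partial_\infty L$ in $\partial_\infty\C^{\l_1}\cup\partial_\infty\C^{\l_2}$ via Claim~\ref{claimtwo}, and then runs three cases using the classification of stable spherical catenoids, Claim~\ref{compact-arc}, Gomes' boundary-angle analysis, and the maximum principle against the foliation $\cF$. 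None of this appears in your proposal. To repair it you would either have to prove that complete leaves of $\cL$ lift to leaves of $\wt{\cL}$ (not obvious, for the reason above), or carry out a direct exclusion of rotational leaves in $W$ along the paper's lines. Your first paragraph (local graphicality at transverse points plus the uniqueness statement in Claim~\ref{claimone} upgrading this to a global graph) is fine and matches the paper.
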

\begin{proof}
 Let $L$ be a complete leaf of $\cL$.
Recall that  the surfaces $\Pi(\Sigma_{n(k)})$ are
$ {\theta}$-graphs
and let $J_{n(k)}$ denote the related positive Jacobi functions
induced by the inner product of
the unit normal field to $\Pi(\Sigma_{n(k)})$ with the Killing
field $ \partial_{\theta }$.
Let $J$ denote the limit Jacobi function on the leaves of $\cL$
and let $J_L$ be the related Jacobi function on $L$.
Since $L $ is the limit of portions of the surfaces
$\Pi(\Sigma_{n(k)})$, the previous observation
implies that  $J_L$
is non-negative. Since $J_L$ is a
non-negative Jacobi function, then either
$J_L\equiv 0$ or $J_L>0$. If $J_L>0$,
then by Claim~\ref{claimone}, $L$ is  the  graph of a smooth function over its projection to
$(\l_1,\l_2)\times \{0\}\times  \R$. Therefore, to prove the claim,  it suffices to show that $J_L>0$.

Arguing by contradiction, assume that $J_L\equiv 0$,
then $L $ is a complete embedded surface in $W$ that is invariant under rotations
around the $z$-axis. In particular,  there exists a complete arc $\beta$ in $L\cap E$ and $\beta $ is embedded 
since $L$ is embedded; completeness of $\beta$ follows from the completeness of $L$. 
Also this arc cannot be bounded in $\HH^3$ since otherwise $\ov{L}$ would be
a bounded $H$-lamination in $\HH^3$, which  is impossible since there would exist a leaf $L'$ of  $\ov{L}$ 
that would be contained in  a geodesic ball $B_{\HH^3}$ centered at the origin and tangent to $\partial B_{\HH^3}$ at
some point $q$.
But   $\partial B_{\HH^3}$ has mean curvature greater than one, which is a contradiction to the mean curvature
comparison principle applied at the point $q$.  Hence, since $\beta$ is not
bounded in $\HH^3$, then $\partial_\infty L \neq\mbox{\rm \O}$.

Since $L $ is  invariant under rotation
around the $z$-axis and $\partial_\infty \cL\subset \Gamma\cup \partial_\infty \C^{\l_1} \cup \partial_\infty \C^{\l_2} $, then
$\partial_\infty L \subset \partial_\infty \C^{\l_1} \cup \partial_\infty \C^{\l_2} $.
To obtain a contradiction we will consider separately
each of the following three possible cases for the limiting behavior of $L $. \vspace{.2cm}

\noindent{\bf Case A:} \, $\partial_\infty L  \subset \partial_\infty \C^{\l_1}$ or
$\partial_\infty L  \subset   \partial_\infty \C^{\l_2} $
and $L $ is a spherical catenoid
as described in the Appendix with its mean curvature vector  pointing toward the $z$-axis.

\noindent{\bf Case B:}  \, $\partial_\infty L  $ contains one component in $\partial_\infty \C^{\l_1} $
and another   component in $\partial_\infty \C^{\l_2} $.

\noindent{\bf Case C:}  \, $\partial_\infty L  \subset \partial_\infty \C^{\l_1}$ or
$\partial_\infty L \subset   \partial_\infty \C^{\l_2} $
and $L $ is a surface of revolution
as described in Gomes~\cite{gom1} with its mean curvature vector  pointing away from the $z$-axis.
Note that in this case it might hold that  $\partial_\infty L $ is a single circle with multiplicity two.
\vspace{.2cm}

First suppose that Case A holds. By the discussion in the Appendix and the
description of stable catenoids, the only possible spherical catenoids
are $\C^{\lambda_1}$ or $\C^{\lambda_2}$ but they do not intersect $\Int (W)$, which
gives a contradiction.

If Case B holds, then there is a compact arc $\alpha\subset L \cup\partial_\infty L $
satisfying the hypotheses of Claim~\ref{compact-arc}.  By the same claim, there must exist
a non-rotational leaf $L_1$ of $\cL$  that intersects $L$, which is impossible since
distinct leaves of $\cL$ are disjoint.

Finally suppose that Case C holds.
If $H=0$, then using the maximum principle applied to the foliation
of $W$ by minimal catenoids gives an immediate contradiction;
hence, assume that $H>0$.
As remarked in Gomes~\cite{gom1}, the properly embedded surfaces of
revolution in $\HH^3$ extend to $C^1$-immersed surfaces in $\ov{\HH}^3$ and they
make a particular
oriented angle $\theta_H\neq\pi/2$ with $\partial_\infty \HH^3$ according to their orientation.
First consider the case where $L $ contains a single circle $S$ component in
$\partial_\infty \C^{\lambda_1}\cup \partial_\infty \C^{\lambda_2}$.
This means that
the surface $L $ makes two different positive oriented angles along its
single boundary circle $S$ at infinity. Since $L$ lies on the side of
the spherical catenoid  $\C^{\lambda_2}$ that makes an acute angle with $\partial_\infty \HH^3$,
and  $\C^{\lambda_2}$ is disjoint from $L$, if
$S\subset \partial_\infty C^{\lambda_2}$, we obtain a contradiction.
Hence, we may assume that $S$ is a boundary curve of $\C^{\lambda_1}$.
But in this case, there is a largest $\lambda\in [\lambda_2,\lambda_1)$ such that $\C^\lambda$ intersects
$L$ at an interior point and $L$ lies on the mean convex side
of $\C^\lambda$. This contradicts the maximum principle.
Therefore, if Case C holds, then either $\partial_\infty L  = \partial_\infty \C^{\l_1}$ or
$\partial_\infty L  =   \partial_\infty \C^{\l_2} $.

Reasoning as in the previous paragraph with the angles along the boundary, we find that
the only possibility is $\partial_\infty L  = \partial_\infty \C^{\l_1}$.
But in this case,   there is again a largest
$\lambda\in [\lambda_2,\lambda_1)$ such that $\C^\lambda$ intersects
$L$ at an interior point and $L$ lies on the mean
convex side of $\C^\lambda$. This contradicts the maximum principle.
This final contradiction completes the proof of the claim.
\end{proof}
Consider a complete leaf $L$ of $\wt L$.  Recall that $L$ has bounded
norm of the second fundamental form, is not invariant under the one-parameter
group of translations $T_{\wt{\theta}}$, and by the maximum principle  $L\subset \Int(\wt{W})$.
It follows from the construction of the $H$-laminations $\cL$ and $\wt{\cL}$ that
$\Sigma_H=\Pi(L)$ is a complete leaf of $\cL$. By the previous
lemma, $\Sigma_H$ is the graph of a smooth function
defined on its $\theta$-projection  $\Theta(\Sigma_H)\subset B$.

We next prove that the leaf $\Sigma_H$ is properly embedded in $\Int(W)$.
If not, then there exists a limit leaf $L\neq \Sigma_H$
of $\cL$ in the closure of $\Sigma_H$.  Since this
leaf is easily seen to be complete as well, by Claim~\ref{complete} $L$
 is a graph of a smooth function  over its projection $\Theta(L)\subset B$.
 Since the open sets $\Theta(\Sigma_H), \Theta(L)$ intersect near
any point of $\Theta(L)$, this contradicts Claim~\ref{claimone} and so $\Sigma_H$
is properly embedded in $\Int(W)$. Since $\Theta \colon \Int(W) \to B$ is a
proper submersion, $\Theta(\Sigma_H)=B$ and thus, by Claim~\ref{claimone},
$\Sigma_H$ is the unique leaf of $\cL$.
Clearly the closure $\ov{\Sigma}_H$ of ${\Sigma}_H$ in $W$ is $ \Sigma_H\cup\C^{\l_1}\cup  \C^{\l_2}$.

To summarize, we have shown that $\Sigma_H$ is a complete graph over $B$
and    is properly embedded in $\Int(W)$.
Moreover: \ben
\item $\Sigma_H$ has bounded norm of the second
fundamental form.
\item The closure $\ov{\Sigma}_H$ of ${\Sigma}_H$ in $W$ is $ \Sigma_H\cup\C^{\l_1}\cup  \C^{\l_2}$.
\item $\partial_\infty \Sigma_H=\Gamma\cup \partial_\infty \C^{\l_1}\cup \partial_\infty \C^{\l_2}$;
see Figure \ref{spiral}.
\een
In particular, the surfaces $\{T_\theta(\Sigma_H)\mid \theta \in [0,2\pi)\}$ together with
the spherical catenoids $ \C^{\l_1}, \C^{\l_2}$ form an $H$-foliation of $W$.
This finishes the proof of the Theorem~\ref{main}.

\section{Appendix}

In this appendix, we  recall some facts about
spherical $H$-catenoids in $\HH^3$ that are used throughout the paper.
 As we have done in the previous sections,
we will work in $\mathbb{H}^3$ using the Poincaré ball model, that is
we consider $\mathbb{H}^3$ as the unit ball in $\rth$ and   its
ideal boundary at infinity corresponds to the boundary of the ball.
We will let $P_{x,y}$, $P_{x,z}$ and $P_{y,z}$ be the related totally
geodesic coordinate planes
in this ball model of $\HH^3$.

\begin{definition} Let $\C$ be a properly
immersed  annulus in $\mathbb{H}^3$ with
constant mean curvature $H\in [0,1]$ and $\PI \C=\alpha_1\cup\alpha_2\subset \Si$
where $\alpha_1$ and $\alpha_2$ are two disjoint round circles in
$\Si$. Let $\gamma$ be the unique geodesic in
$\mathbb{H}^3$ from the center of $\alpha_1$ to the
center of $\alpha_2$. If $\C$ is rotationally invariant with
respect to $\gamma$,
then we call $\C$ a {\em spherical $H$-catenoid} with rotation axis $\gamma$.
\end{definition}

In other words, spherical $H$-catenoids are obtained by rotating
a certain curve, a catenary, around a geodesic. In~\cite{gom1}, Gomes studied
the spherical $H$-catenoids in $\mathbb{H}^3$ for
$0\leq H\leq1$, and classified them in terms
of the generating curve   which is a solution to a
certain ODE. In what follows, when $H\neq 0$, we will
only focus on the spherical $H$-catenoids for which
the mean curvature vector points towards the rotation
axis.

After applying an isometry,  we can assume that the
$x$-axis is the rotation axis for the spherical
$H$-catenoid and that the circles $\alpha_1$ and $\alpha_2$
are symmetric with respect to the $(y,z)$-plane.
Gomes proved that for a fixed $H\in [0,1)$ and
$\l\in(0,\infty)$ there exists a unique spherical
$H$-catenoid $\C_{H}^\lambda$ with generating curve
$\beta_H^\lambda$ and satisfying the following properties:
\begin{enumerate}
\item $\beta_H^\lambda$ is the graph in
normal coordinates  over the $x$-axis of a positive
even function $g^\lambda_H(x)$ defined on an interval
$(-d^\lambda_H, d^\lambda_H)$, namely
$$\beta_H^\lambda= \{(x,g^\lambda_H(x)) \mid x\in(-d^\lambda_H, d^\lambda_H)\},$$ and satisfying
$\lambda=g^\lambda_H(0)$; see Figure \ref{gomesfigures}-right;
\item Up to isometry,
a spherical $H$-catenoid is isometric to a certain
$\C_{H}^\lambda$.
\end{enumerate}
In particular, $\C_H^\lambda$ is embedded and symmetric
with respect to the $(y,z)$-plane. Recall that if $\Sigma$ is a
properly embedded $H$-surface, $H<1$ in $\HH^3$, with limit set at infinity
a smooth compact embedded curve with multiplicity one, then the closure of $\Sigma$ in the closed ball
$\overline{\HH}^3$ is a $C^1$ surface that makes a constant angle with
the boundary sphere and this angle is the same as the one
that a complete curve in $\HH^2$ of constant geodesic curvature $H$ makes with the
circle $\partial_\infty \HH^2$.

Gomes also analyzed the relation between the
spherical $H$-catenoid $\C^\lambda_H$ and its
asymptotic boundary $\PI \C^\lambda_H = \tau_{H,\lambda}^+
\cup \tau_{H,\lambda}^-$ where each of the curves
$\tau_{H,\lambda}^\pm$ is a circle in $\Si$. Define the
asymptotic distance function $d_H(\lambda)$ as the
distance between the geodesic planes $P^+_{H,\lambda}$
and $P^-_{H,\lambda}$ where
$\PI P^\pm_{H,\lambda}= \tau_{H,\lambda}^\pm$, respectively.
By an abuse of language, we will say that
$d_H(\lambda)$  is the distance between the asymptotic
circles of $\C^\lambda_{H}$. Recall that we use
normal coordinates  over the $x$-axis in the description of the function
$g_H^\lambda(x)$.  Let the asymptotic boundary
of the generating curve $\beta^\lambda_H$ be
the points $\{w_{H,\lambda}^+,w_{H,\lambda}^-\}\subset \Si$, i.e.,
$\PI \beta^\lambda_H = \{w_{H,\lambda}^+,w_{H,\lambda}^-\}\subset \Si$. Then the
geodesic projections of these points to the $x$-axis will be
$\Pi(w_{H,\lambda}^\pm)=\pm d^\lambda_H$. Since $w_{H,\lambda}^\pm \subset
\tau_{H,\lambda}^\pm$ and the construction is rotationally invariant,
it is easy to see that $d_H(\lambda) = 2d^\lambda_H$.

Gomes showed that for a fixed $H\in[0,1)$, the function $d_{H}(\lambda)$ increases
from a non-negative value $\lim_{\lambda\to0}d_{H}(\lambda)$, which is
zero and not acquired when $H=0$ and positive when $H>0$, reaches a maximum at a certain $c_H\in (0,\infty)$,
and then  decreases to $0$ as $\lambda\to \infty$; see Figure~\ref{gomesfigures} and \cite[Lemma 3.5]{gom1}.

\begin{figure}[h]
\begin{center}
$\begin{array}{c@{\hspace{.2in}}c}

\relabelbox  {\epsfxsize=2.5in \epsfbox{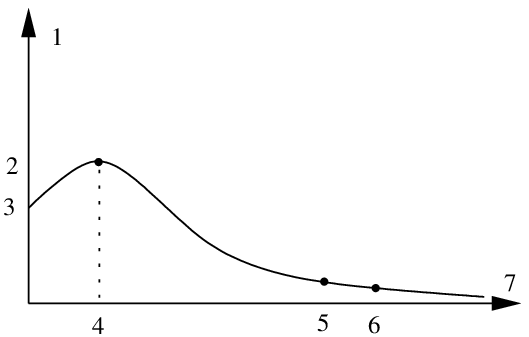}} \relabel{1}{\footnotesize $d_H(\lambda)$} \relabel{4}{\footnotesize $c_H$}
\relabel{5}{\footnotesize $\lambda_1$} \relabel{6}{\footnotesize $\lambda_2$} \relabel{7}{\footnotesize $\lambda$} \endrelabelbox &

\relabelbox  {\epsfxsize=2.5in \epsfbox{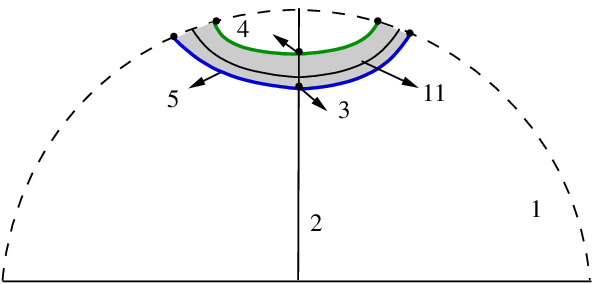}} \relabel{1}{\scriptsize $S_\infty$} \relabel{2}{$\lambda$} \relabel{3}{\scriptsize $\lambda_1$}
\relabel{4}{\scriptsize $\lambda_2$} \relabel{5}{\scriptsize $\beta_{\lambda_1}$} \relabel{11}{\scriptsize $\beta_\lambda$}  \endrelabelbox \\
[0.4cm]
\end{array}$

\end{center}
\caption{\label{gomesfigures} $\lambda$ represents the distance from the rotation axis,
and $d_H(\lambda)$ represents the asymptotic distance between
the asymptotic circles of $\C^\lambda_H$.}
\end{figure}

In particular, this discussion implies that for a fixed $H\in[0,1)$, the distance between the
asymptotic boundaries of the spherical $H$-catenoids is bounded by $\max(d_{H})$.
He also showed that $\max({d_H})$ is monotone increasing in $H$, and
$\max({d_H}) \to \infty$ as $H\to 1$. The following lemma summarizes some of
these results.

\begin{lemma}[\cite{gom1}] \label{Hcatenoid}  For any $H\in [0,1)$ and any $\l\in(0,\infty)$,
there exists  a unique  spherical $H$-catenoid $\C^\lambda_{H}$ in $\mathbb{H}^3$ such
that the distance from its generating curve $\beta^\lambda_{H}$ to the $x$-axis is
$\lambda$. Moreover, there exists a number
  $c_H>0$ such that $d_{H}(c_H)=\max_{(0,\infty)}d_{H}$, and $d_H(\lambda)$ decreases
  to $0$ on the interval  $[c_{H},\infty)$ as $\lambda$ goes to infinity; see Figure \ref{gomesfigures}-left.
\end{lemma}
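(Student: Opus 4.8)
The plan is to recover the one-parameter family of profile curves as solutions of an ODE and then to study the resulting asymptotic distance function $d_H$ analytically; most of this analysis is carried out in detail by Gomes~\cite{gom1}, so I will only indicate the structure here. First I would set up Fermi (normal) coordinates $(x,r)$ along the $x$-axis, in which the hyperbolic metric on the half-plane $\{r>0\}$ takes the form $dr^2+\cosh^2 r\,dx^2$, and a rotationally invariant surface about this axis is determined by a profile curve $r=g(x)$. Imposing that the surface have constant mean curvature $H$ yields a second-order ODE for $g$; since this ODE does not depend explicitly on $x$, it admits a first integral $E(g,g')=\text{const}$, the rotational flux. The waist normalization, namely that $\beta_H^\lambda$ be symmetric with respect to the $(y,z)$-plane and have closest distance $\lambda$ to the axis, forces the initial conditions $g(0)=\lambda$, $g'(0)=0$; by ODE existence and uniqueness this determines a unique solution, hence a unique $\C_H^\lambda$, and the profile is automatically even by uniqueness applied to $x\mapsto g(-x)$. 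Using the first integral one checks that $g$ is increasing for $x>0$, that its maximal interval of definition is a bounded interval $(-d_H^\lambda,d_H^\lambda)$, and that $g(x)\to\infty$ as $x\to d_H^\lambda$, i.e. the catenoid limits onto two round circles in $\Si$ whose bounding geodesic planes project to $\pm d_H^\lambda$ on the axis; this gives $d_H(\lambda)=2d_H^\lambda$.

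Next I would express $d_H^\lambda$ as an explicit improper integral by separating variables in the first integral: $d_H^\lambda=\int_\lambda^\infty \frac{dr}{F_H(r,\lambda)}$, where $F_H$ is the positive function giving $|g'|$ in terms of $g$ and the flux value, the latter itself a function of $\lambda$. Continuity, and in fact smoothness, of $\lambda\mapsto d_H(\lambda)$ on $(0,\infty)$ then follows from smooth dependence of solutions on initial data, together with a dominated-convergence estimate controlling the integrable singularity of the integrand at the endpoint $r=\lambda$. The boundary behavior comes next: as $\lambda\to\infty$ the factor $\cosh^2 r$ in the metric forces $F_H$ to grow, and a direct estimate of the integral gives $d_H(\lambda)\to 0$; as $\lambda\to 0$ one analyzes the degenerate limit, obtaining $\lim_{\lambda\to 0}d_H(\lambda)=0$ when $H=0$, the family collapsing to a doubly covered totally geodesic plane, and a positive limit when $H>0$. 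In particular $d_H$ is continuous and bounded on $(0,\infty)$ with $d_H(\lambda)\to 0$ as $\lambda\to\infty$, and it is not identically zero, so its supremum is attained at some $c_H\in(0,\infty)$ with $d_H(c_H)=\max_{(0,\infty)}d_H$.

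The main obstacle is the last assertion, that $d_H$ is monotone decreasing on all of $[c_H,\infty)$ rather than merely near infinity; this is where Gomes's careful phase-plane analysis is essential. Here I would differentiate the integral formula for $d_H(\lambda)$ in $\lambda$ and show that for $\lambda$ beyond the first critical point the derivative is strictly negative — equivalently, that the flux is a strictly monotone function of $\lambda$ on that range and that the integrand responds monotonically — thereby ruling out any second local maximum. Carrying this sign computation out rigorously, via either a change of variables that makes the monotonicity of the integrand transparent or a comparison argument between the profile curves for two nearby values of $\lambda$, is the technical heart of the statement, and for these details I would rely on \cite[Lemma 3.5]{gom1}.
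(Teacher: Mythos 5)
This lemma is stated in the paper purely as a citation of Gomes: the Appendix summarizes his results (existence and uniqueness of $\C_H^\lambda$ with mean curvature vector pointing toward the axis, the qualitative shape of $d_H$, and the pointer to \cite[Lemma 3.5]{gom1}) and offers no independent proof. Your outline is a faithful reconstruction of how that cited argument goes --- Fermi coordinates with metric $dr^2+\cosh^2 r\,dx^2$ along the axis, the autonomous second-order ODE for the profile with its first integral, evenness and uniqueness from the waist initial conditions $g(0)=\lambda$, $g'(0)=0$ (together with the fixed sign convention for $H$, which is what singles out these catenoids from the ones with mean curvature vector pointing away from the axis that appear later in Case~C of Claim~\ref{complete}), the integral formula for $d_H^\lambda$, and the deferral of the global monotonicity on $[c_H,\infty)$ to Gomes. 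In substance you are taking the same route as the paper, namely relying on \cite{gom1} for the analytic heart.

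One step in your second paragraph is too quick as written. For $H>0$ you correctly record that $\lim_{\lambda\to 0}d_H(\lambda)$ is positive; but then continuity, boundedness, $d_H(\lambda)\to 0$ as $\lambda\to\infty$, and ``not identically zero'' do not by themselves force the supremum to be attained in $(0,\infty)$: if $d_H$ were, say, strictly decreasing from its positive limit at $\lambda=0$, the supremum would equal that limit and would not be achieved at any interior point. What rescues the claim is precisely the initial monotone increase of $d_H$ away from $\lambda=0$, so that $\sup d_H$ strictly exceeds both boundary limits; this is part of the same phase-plane analysis you are already deferring to \cite[Lemma 3.5]{gom1}. You should either import that fact explicitly before concluding that $c_H$ exists, or state that the attainment of the maximum, like the decrease on $[c_H,\infty)$, rests on Gomes's monotonicity analysis rather than on a soft compactness argument.
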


In the next proposition we  construct the foliations by spherical $H$-catenoids
that are used in producing our examples.

\begin{proposition}~\label{foliation} For any $ H\in [0, 1)$,
the family of spherical $H$-catenoids
$\cF_H=\{ \C^\lambda_H \ | \ \lambda\in [c_H,\infty)\}$  foliates
the  closure of the non-simply-connected component of \, $\mathbb{H}^3\setminus \C^{c_H}_H$.
In particular, each of the spherical catenoids in $\cF_H$ admits a positive Jacobi function,
that is induced by the associated normal variational field.
\end{proposition}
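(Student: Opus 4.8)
The plan is to prove Proposition~\ref{foliation} by first establishing the foliation property and then deducing the existence of the positive Jacobi function as an immediate corollary. The key input is Lemma~\ref{Hcatenoid}, which tells us that on the interval $[c_H,\infty)$ the asymptotic distance function $d_H(\lambda)$ is strictly decreasing to $0$, and that each $\C^\lambda_H$ is an embedded, rotationally symmetric annulus symmetric with respect to the $(y,z)$-plane whose mean curvature vector points toward the rotation axis (the $x$-axis). First I would fix $H\in[0,1)$ and consider the generating curves $\beta^\lambda_H = \{(x,g^\lambda_H(x)) \mid x\in(-d^\lambda_H,d^\lambda_H)\}$ in normal coordinates over the $x$-axis, where $g^\lambda_H(0)=\lambda$. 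The claim reduces, by rotational symmetry, to showing that the family of generating curves $\{\beta^\lambda_H \mid \lambda\in[c_H,\infty)\}$ foliates the appropriate half-plane region in the totally geodesic plane $P_{x,z}$, namely the closure of the component of $P_{x,z}^+\setminus\beta^{c_H}_H$ not containing the $x$-axis (here $P_{x,z}^+$ is the open half containing $\beta^\lambda_H$).

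The heart of the argument is a disjointness statement: for $c_H \le \lambda < \mu$, the curves $\beta^\lambda_H$ and $\beta^\mu_H$ are disjoint, with $\beta^\mu_H$ lying strictly on the side of $\beta^\lambda_H$ away from the $x$-axis. I would prove this by a maximum-principle sweepout argument. Since $d_H$ is strictly decreasing on $[c_H,\infty)$, the domain interval $(-d^\mu_H,d^\mu_H)$ is strictly contained in $(-d^\lambda_H,d^\lambda_H)$; moreover both curves close up to $C^1$ curves in $\overline{\HH}^3$ meeting $\Si$ at the same constant angle $\theta_H$ determined by $H$ (as recalled in the excerpt). Consider shrinking: rather than comparing $\beta^\lambda_H$ and $\beta^\mu_H$ directly, compare $\C^\lambda_H$ and $\C^\mu_H$ as embedded $H$-surfaces in $\HH^3$. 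If they were not disjoint, then since each is a properly embedded annulus of revolution and their asymptotic circles are nested (the ones of $\C^\mu_H$ bound disks in $\Si$ strictly inside those of $\C^\lambda_H$, because $d_H(\mu)<d_H(\lambda)$ and they are concentric), we could translate $\C^\mu_H$ along the $x$-axis — or, better, use the one-parameter family itself — to find a first point of interior tangency between $\C^{\lambda'}_H$ and $\C^{\mu}_H$ for some $\lambda'$, at which both surfaces lie locally on one side of each other. Since the mean curvature vectors of both point toward the $x$-axis and the surfaces are coherently oriented, this contradicts the maximum principle for $H$-surfaces, unless $\lambda'=\mu$, i.e. the surfaces coincide. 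This forces disjointness and the correct nesting.

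Granting disjointness and nesting, I would then verify that the union $\bigcup_{\lambda\in[c_H,\infty)}\C^\lambda_H$ is exactly the closed region $\overline{R}$, where $R$ is the non-simply-connected component of $\HH^3\setminus\C^{c_H}_H$. The inclusion $\subset$ is clear once nesting is known and once one checks $\C^{c_H}_H$ is the ``innermost'' leaf. For $\supset$, one uses that as $\lambda\to\infty$ the catenoids $\C^\lambda_H$ escape to the asymptotic boundary: $d_H(\lambda)\to 0$ and the waist circle (which lies at hyperbolic distance $\lambda$ from the $x$-axis) goes to infinity, so the catenoids exhaust $R$; continuity of the dependence $\lambda\mapsto\C^\lambda_H$ (which follows from the smooth dependence of solutions of Gomes's ODE on the initial condition $\lambda=g^\lambda_H(0)$) gives that every point of $R$ lies on some leaf. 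Finally, for the last sentence: a smooth one-parameter family of $H$-surfaces foliating a region produces, on each leaf, the variational normal field $\partial_\lambda\C^\lambda_H$, whose normal component is a Jacobi function; since the family is a foliation this component is nowhere zero, hence (after choosing the correct sign) a positive Jacobi function on each $\C^\lambda_H$. The main obstacle I anticipate is making the maximum-principle comparison fully rigorous at the asymptotic boundary: the surfaces are only $C^1$ up to $\Si$ and meet it at an angle $\theta_H\ne\pi/2$, so one must argue that a tangency or first contact cannot ``escape to infinity'' — this is handled by the boundary maximum principle for $H$-surfaces meeting $\Si$ at a fixed angle, or alternatively by first proving disjointness of the asymptotic boundary circles from the nesting of $d_H$ and then applying the interior maximum principle only at finite points, which is the route I would take to keep the argument clean.
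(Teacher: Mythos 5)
Your overall strategy---continuity of the Gomes family plus the maximum principle, with the behavior as $\lambda\to\infty$ supplying the rest---is the same as the paper's, but there is a genuine gap at the step that carries the whole proof. A first-contact/sweepout argument for disjointness needs an \emph{anchor}: a parameter value at which the two catenoids being compared are already known to be disjoint, from which the sweep can start. You never produce one. Starting the sweep at $\lambda'=\mu$, where the surfaces coincide, does not work: to know that $\C^{\lambda'}_H$ and $\C^{\mu}_H$ actually separate for $\lambda'$ slightly below $\mu$ you would need the normal component of the variation field $\partial_{\lambda}\C^{\lambda}_H$ to be nonvanishing everywhere (not just at the waist and at the asymptotic circles), which is essentially the positivity of the Jacobi function you are trying to prove---so the argument as written is circular. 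The paper supplies the anchor by proving that $\C^{c_H}_H$ and $\C^{\lambda}_H$ are disjoint for all sufficiently large $\lambda$, and then sweeps down from there; that disjointness is where the real work lies.

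Relatedly, your justification that the catenoids escape to the asymptotic boundary---``$d_H(\lambda)\to 0$ and the waist circle goes to infinity''---controls only the waist and the asymptotic circles, not the body of the surface near its ends. Even granting that $\C^{\lambda}_H$ lies in the region at hyperbolic distance at least $\lambda$ from the rotation axis, the closure of that region in $\ov{\HH}^3$ contains all of $\Si$ for every $\lambda$, so nothing prevents the ends of $\C^{\lambda}_H$ from lingering near $\tau^{\pm}_{H,c_H}$ and meeting the ends of $\C^{c_H}_H$ arbitrarily far out. The paper closes exactly this hole with a barrier argument: comparing against rotationally invariant constant mean curvature $H$ planes shows that, as $d_H(\lambda)\to 0$, the \emph{entire} surface $\C^{\lambda}_H$ converges in the closed ball to the unit circle of the $(y,z)$-plane, which yields both the disjointness anchor and the exhaustion of the region. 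Your proposed fallback (disjointness of the asymptotic circles plus the interior maximum principle) confines the intersection of a \emph{fixed} pair to a compact subset of $\HH^3$, but that compact set is not uniform as the sweep parameter approaches the coincidence value, so the first contact can still escape to infinity. Finally, note that the paper handles $H=0$ separately, as a limit of the foliations for $H>0$; your argument does not address this case.
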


\begin{proof} We first consider the case $H\in (0,1)$. After adding the leaf
$\C^{c_H}_H$,
it suffices to  show that
the family of spherical $H$-catenoids
$\cF_H=\{ \C^\lambda_H \ | \ \lambda\in (c_H,\infty)\}$  foliates
the   non-simply-connected component of \, $\mathbb{H}^3\setminus \C^{c_H}_H$.
Note that by construction, the elements in $\cF_H$ are
embedded and form a continuous family with respect to
$\lambda$. Therefore, if for $\lambda_0\in (c_H,\infty)$
the spherical $H$-catenoids $\C^{c_H}_H$ and
$\C^{\lambda_0}_H$ are disjoint, then an application
of the maximum principle gives that
$\{ \C^\lambda_H \ | \ \lambda\in (c_H,\lambda_0)\}$
is a foliation of the region between  $\C^{c_H}_H$ and
$\C^{\lambda_0}_H$. Therefore, it suffices to show that
for any $\lambda$ sufficiently large, $\C^{c_H}_H$
and $\C^{\lambda}_H$ are disjoint.

Since $\lim_{\lambda\to\infty}d_H(\lambda)=0$, the
end points of the generating curve $\beta _H^\l$ of
$\C^{\lambda}_H$ converge to $(0,0,1)$
in $\partial_\infty \HH^3$ as $\lambda$ goes to infinity.
Recall that the mean curvature vector of $\C_H^\lambda$
is pointing toward the $x$-axis. Using barriers that are planes
of constant mean curvature $H$ and are rotationally
invariant with respect to rotations around the $x$-axis,
it can be shown that as
$d_H(\l)\to 0$, the set $\C^{\lambda}_H$ viewed in the unit ball
converges to the unit
circle in the $(y,z)$-plane. Thus, for any $\lambda$
sufficiently large, $\C^{c_H}_H$ and $\C^{\lambda}_H$
are disjoint and, by the previous discussion, the
family of spherical $H$-catenoids $\cF_H$  foliates
the non-simply-connected component of \,
$\mathbb{H}^3\setminus \C^{c_H}_H$ when $H\in(0,1)$.

The foliation in the $H=0$ case can be obtained
as the limit as $H$ goes to zero of the foliations $\cF_H$.
\end{proof}

\begin{remark} \label{rem:unstable} {\em Since,  for any fixed $H\in [0,1)$ and $\l<c_H$, the
Jacobi function $J_H^{\l}$ on $\C^\l_H$ induced from the variational
vector field of  the Gomes family $\C^{\l}_H$ is positive
along the circle in $\C_H^\l$ closest to its axis of revolution but limits to
$-\infty$ at its asymptotic boundary, then such a $\C^\l_H$ is unstable.  Thus,
the pair of asymptotic boundary circles of $\C^\lambda_H$, $\l<c_H$,
bounds  two spherical $H$-catenoids, a stable one $\C^{\l_2}$,
and an unstable one $\C^{\l_1}$, where $\l_1<\l_2$.
}\end{remark}

We will need the next proposition in the proof of Theorem~\ref{main}.
\begin{proposition} \label{prop:5.3}
Fix $H\in [0,1)$.  For $t\in [0,\infty)$, let $S(t)$ be the parallel surface in
$\HH^3$ that lies ``above"  $P_{y,z}$ at distance $t$ and for $t\in (-\infty,0)$
let $S(t)$ denote the parallel surface that lies ``below"   $P_{y,z}$ at distance $-t$.
Let $V_x$ denote the related Killing field generated by the
one-parameter group of isometries $\{\phi_\ve\colon\HH^3\to \HH^3 \mid \ve\in \R\} $
where $\phi_\ve\colon\HH^3\to \HH^3$ is the hyperbolic translation
along the $y$-axis by the signed
distance $\ve$. Then:
\ben[1.]
\item For each $t\in \R$, $S(t)$ intersects  $\C_H^{c_H}$ transversely
in a single circle (closed curve of constant geodesic curvature in $S(t)$)
centered at the intersection of the $x$-axis with $S(t)$.
\item If we let $\C_H^{c_H}(+)$ denote the portion of
$\C_H^{c_H}$ with non-negative $y$-coordinate,
then  $\C_H^{c_H}(+)$ is a $V_y$-Killing graph over its
projection to $P_{x,z}$, and this projected
domain has boundary $\C_H^{c_H}\cap P_{x,z}$.
\item Let $D(t)\subset S(t)$ be the open disk
bounded by the circle $S(t)\cap \C_H^{c_H}$.
For $\ve<0$ sufficiently close to zero,
$\phi_{\ve}(x\mbox{\rm -axis})$ intersects
 each
of the disks in a single point.
\item For $\ve$ satisfying the previous item and for
$\l_2\in (c_H,\infty)$ sufficiently close to $c_H$,
$\displaystyle \phi_{\ve}(\C_H^{c_H})\cap \cup_{s \in [c_H,\l_2]}\C_H^s$
is a pair of infinite
strips in $ W=\cup_{s\in [c_H,\l_2]}C_H^s$ that
separate $ W$ into two regions,
and for one of these two regions the portion of
$ \phi_\ve(\C_H^{c_H})$ in its boundary has
non-negative mean curvature with respect to the
inward pointing to the boundary.
 \een
\end{proposition}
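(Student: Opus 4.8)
The plan is to prove the four items of Proposition~\ref{prop:5.3} in order, since each relies on the previous ones. For item~(1), I would exploit the symmetry of $\C_H^{c_H}$ under rotations about the $x$-axis. Each parallel surface $S(t)$ is itself a totally umbilic copy of $\HH^2$ (a hypersphere or geodesic plane) that is rotationally invariant about the $x$-axis, since $P_{y,z}$ is orthogonal to the $x$-axis and the $x$-axis lies in $P_{y,z}$ — wait, more carefully, the $x$-axis is \emph{perpendicular} to $P_{y,z}$, and the parallel surfaces $S(t)$ are all rotationally symmetric about the $x$-axis. Hence the intersection $S(t)\cap \C_H^{c_H}$ is a rotationally symmetric curve in $S(t)$, i.e. a ``circle'' centered at the point where the $x$-axis meets $S(t)$. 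That it is a single such circle and the intersection is transverse follows from examining the generating catenary $\beta_H^{c_H}$: in normal coordinates over the $x$-axis, $\beta_H^{c_H}$ is the graph of the even function $g_H^{c_H}(x)$ on $(-d,d)$, and each $S(t)$ meets the $(x,z)$-slice of $\C_H^{c_H}$ in exactly the two points with a fixed $x$-coordinate $x(t)$, with transversality because $g_H^{c_H}$ has finite slope on the interior of its interval (it only becomes vertical at the ideal boundary). Rotating about the $x$-axis gives the single transverse circle.

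For item~(2), the claim is that the half $\C_H^{c_H}(+)=\{y\geq 0\}\cap\C_H^{c_H}$ is a Killing graph for $V_y$ over its projection to $P_{x,z}$, with projected domain bounded by $\C_H^{c_H}\cap P_{x,z}$. Here I would use item~(1): the integral curves of $V_y$ are the equidistant curves to the $y$-axis lying in planes containing the $y$-axis, and these foliate $\HH^3$; I want to show each such integral curve meets $\C_H^{c_H}(+)$ at most once. Because $\C_H^{c_H}$ is a graph over the $x$-axis in normal coordinates of a function that is even and, crucially, \emph{monotone} in $|x|$ appropriately (or at least because the generating catenary lies in a slab and opens monotonically toward infinity — this is exactly the content of Gomes' analysis of $\beta_H^\lambda$ recalled before Lemma~\ref{Hcatenoid}), the surface is ``starshaped'' with respect to the $V_y$-flow on the side $y\geq 0$. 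More honestly, I expect one shows directly from the ODE for $g_H^{c_H}$, or from the convexity/monotonicity properties of the catenary, that the function $x\mapsto$ (signed $V_y$-distance of the graph point) is well-defined and that the $V_y$-projection $\C_H^{c_H}(+)\to P_{x,z}$ is injective with image the region of $P_{x,z}$ between the two components of $\C_H^{c_H}\cap P_{x,z}$. The boundary statement is then immediate since points with $y=0$ project to themselves.

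For item~(3): the disks $D(t)\subset S(t)$ shrink as $|t|\to\infty$ and are all ``centered'' on the $x$-axis by item~(1). The curve $\phi_\ve(x\text{-axis})$ is a small perturbation of the $x$-axis, converging uniformly on compacta (in the closed ball model) to the $x$-axis as $\ve\to 0$, and its asymptotic endpoints converge to those of the $x$-axis. Since the $x$-axis meets every $D(t)$ in exactly one point (its center), and the family $\{D(t)\}$ together with the asymptotic caps forms a proper one-parameter family whose union is an embedded open ``tube'' about the $x$-axis, a standard transversality-plus-compactness argument shows that for $\ve<0$ close enough to $0$ the translated axis still meets each $D(t)$ exactly once: near $t=0$ by transversality and an implicit-function/continuity argument, and for $|t|$ large by using that the disks $D(t)$ are uniformly transverse to the $x$-axis and $\phi_\ve$ is $C^1$-close to the identity on the relevant compact-in-$\overline{\HH}^3$ region. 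I would package this as: the signed intersection number is $1$ for all $t$ and $\ve$, and for $\ve$ small the intersection is transverse and the geometric count equals the algebraic count.

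For item~(4), which I expect to be the main obstacle: I need both that $\phi_\ve(\C_H^{c_H})\cap W$ consists of two infinite strips separating $W$ when $\l_2$ is close to $c_H$, and the mean-curvature sign statement. For the first part: by item~(1) applied to the asymptotic circles, for $\ve<0$ small the asymptotic boundary circles of $\phi_\ve(\C_H^{c_H})$ meet those of $\C_H^{c_H}$ transversely in four points (the circles are nested/linked the right way by item~(3)), hence $\phi_\ve(\C_H^{c_H})$ meets $\C_H^{c_H}$ transversely in a pair of infinite arcs $l^\pm$ — this is the content promised in the main text around the $l^\pm$ discussion, and it follows by a direct analysis of two rotated-and-translated copies of the same catenoid in the $(x,z)$-plane together with the rotational symmetry about the $x$-axis. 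Then, since $W=\cup_{s\in[c_H,\l_2]}\C_H^s$ is a thin foliated neighborhood of $\C_H^{c_H}$ for $\l_2$ close to $c_H$, and $\phi_\ve(\C_H^{c_H})$ is transverse to $\C_H^{c_H}$ along $l^\pm$, continuity/transversality gives that $\phi_\ve(\C_H^{c_H})\cap W$ is a tubular-neighborhood thickening of $l^+\cup l^-$, i.e. two strips $\T_\pm\cong l^\pm\times(c_H,\l_2)$; these are properly embedded, unbounded, and disjoint from the asymptotic caps (by item~(3)'s control of the axes and hence of the disks $D(t)$), so they genuinely separate $W$ into two pieces $W^\pm$. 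For the mean-curvature sign: $\phi_\ev$ is an isometry of $\HH^3$, so $\phi_\ve(\C_H^{c_H})$ has constant mean curvature $H$, and by convention its mean curvature vector points toward $\phi_\ve(x\text{-axis})$. One of the two components $W^\pm$ — the one whose closure contains $\phi_\ve(x\text{-axis})\cap W$, equivalently the one ``on the concave side'' — has the property that the inward normal to that region along $\T_\pm\subset\phi_\ve(\C_H^{c_H})$ agrees with the mean curvature vector, hence along that portion of its boundary the mean curvature with respect to the inward normal is $H\geq 0$. Identifying which component this is, and checking that the choice is consistent along both strips $\T_+$ and $\T_-$, is the delicate bookkeeping point; it is forced by the fact that $\phi_\ve$ translates along the $y$-axis and $\ve<0$, so both strips curve ``the same way'' relative to $W$. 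I would present this sign check by fixing the unit normal to $\phi_\ve(\C_H^{c_H})$ pointing toward $\phi_\ve(x\text{-axis})$ and observing it points out of $W^-$ (the component containing $\C_H^{\l_2}$-type leaves on the convex side) and into $W^+$, so $\T_+\cup\T_-$ is $H$-convex as part of $\partial W^+$; the convergence to $c_H$ being close guarantees the strips stay transverse and the normal direction does not flip across $W$.
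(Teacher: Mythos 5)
Your overall outline follows the same skeleton as the paper (rotational symmetry for item~(1), items~(2)--(3) by softer arguments, item~(4) via the strips $l^\pm$), and indeed the paper itself leaves items~(2) and~(3) to the reader, so I will not press you on those beyond one remark. But there are two genuine gaps. First, in item~(1) your transversality and single-point argument rests on identifying $S(t)\cap P_{x,z}$ with the Fermi-coordinate level set $\{x=x(t)\}$ over the $x$-axis. That identification is false for $t\neq 0$: the parallel surface $S(t)$ is an equidistant, umbilic but non-totally-geodesic surface, and its trace on $P_{x,z}$ is the curve equidistant from the $z$-axis at distance $t$, not a geodesic normal to the $x$-axis. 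Hence neither ``exactly the two points with a fixed $x$-coordinate'' nor ``transversality because $g_H^{c_H}$ has finite slope'' is justified: \emph{a priori} such an equidistant curve could meet the catenary $\beta_H^{c_H}$ in several points or tangentially. The paper closes exactly this gap with a stability argument: a tangency of $S(t)$ with $\C_H^{c_H}$ would force the translational Killing field (which is everywhere tangent to the leaves $S(t)$) to be tangent to $\C_H^{c_H}$ along an entire circle and along its mirror circle under reflection in $P_{y,z}$, so the compact subannulus they bound would carry a Jacobi function vanishing on its boundary and would not be strictly stable, contradicting the stability of $\C_H^{c_H}$; together with the linking-number argument for nonemptiness and the fact that $S(0)$ meets the catenoid in a single circle, this gives item~(1). (A fixable alternative in your spirit is to check directly that $x\mapsto \sinh x\,\cosh g_H^{c_H}(x)$ is strictly increasing, using that $g_H^{c_H}$ is even and increasing in $|x|$; but some such argument is needed and you do not supply it.)

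Second, in item~(4) you pass from ``the asymptotic circles meet transversely in four points'' to ``the surfaces meet in a pair of infinite arcs $l^\pm$,'' deferring the justification to a ``direct analysis'' you never carry out; this deduction is the actual content of the item, since two properly embedded annuli whose asymptotic boundaries cross transversely can in general also intersect along additional compact curves or tangentially. The paper's proof is precisely the missing analysis, and it is short: because $\phi_\ve$ translates along the $y$-axis it preserves $P_{y,z}$ and hence every leaf $S(t)$, so $\phi_\ve(\C_H^{c_H})\cap S(t)=\phi_\ve\bigl(\C_H^{c_H}\cap S(t)\bigr)$ is a congruent copy, inside $S(t)$, of the circle from item~(1), and by item~(3) its center lies in the disk $D(t)$; therefore for \emph{every} $t$ the two circles meet transversely in exactly two points, and sweeping over $t\in\R$ yields exactly two properly embedded arcs, which thicken to the two strips once $\l_2$ is sufficiently close to $c_H$. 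Your closing discussion of the mean-curvature sign is consistent with the paper, which essentially leaves that point to the reader; but note also that your claim in item~(3) that the disks $D(t)$ ``shrink'' as $|t|\to\infty$ is incorrect --- in the ball compactification they converge to disks in $\Si$ bounded by the asymptotic circles of $\C_H^{c_H}$ --- even though the intended conclusion of item~(3) can still be reached by tracking the ideal endpoints of $\phi_\ve(x\mbox{\rm -axis})$.
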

\begin{proof}
Since the boundary circle of $P_{y,z}$ has linking
number 1 with the arc $\beta_H^{c_H}$ and the $x$-axis,
each of the planes $S(t)$ intersect $\beta_H^{c_H}$ in some
point, and so since these planes are also invariant under
rotation around the $x$-axis, each point in
$\beta_H^{c_H}\cap S(t)$ lies on a circle of intersection of
$S(t)$ with  $\C_H^{c_H}$.
Also note that $S(0)=P_{y,z}$ intersects  $\C_H^{c_H}$ orthogonally in
a circle of radius $c_H$ centered at $(0,0,0)$.
If  $S(t)$ fails to intersect $\C_H^{c_H}$
transversely at some point, then $V_y$ is tangent
to  $\C_H^{c_H}$ along some circle $S$
in  $\C_H^{c_H}$ as well as to the related circle in
$S'\subset \C_H^{c_H}$ obtained by reflection in
the plane $P_{y,z}$. But in this case the compact
subannulus of $ \C_H^{c_H}$ bounded by $S\cup S'$
would represent a compact non-strictly stable
subdomain of  $\C_H^{c_H}$, which contradicts that $ \C_H^{c_H}$
is stable. This contradiction shows that $S(t)$ always intersects
$ \C_H^{c_H}$ transversally. Since $S(0)$ and $ \C_H^{c_H}$ intersect
along a single circle, elementary arguments imply
that every $S(t)$ intersects $ \C_H^{c_H}$
transversely in a single circle centered at the point of
intersection of $S(t)$ with the $x$-axis.
This finishes the proof of item~1.

Items~2 and 3  follow from
item~1 and the details will be left to the reader.
Finally, item~4 follows  from item~3 after  observing that for
$\l_2$ chosen sufficiently close to $c_H$, then
for all $t\in \R$, $\phi_\ve(\C_H^{c_H})\cap S(t)$ is a translation
of the  circle $\C_H^{c_H}\cap S(t)$ and these two
circles intersect transversely in two points.
\end{proof}

\nocite{cosk2,na1}

\bibliographystyle{plain}
\bibliography{bill}
\end{document}